\newcommand{\Addresses}{{% additional braces for segregating \footnotesize
  \bigskip
  \footnotesize

  \textsc{Department of Mathematics,
University of Michigan-Ann Arbor,
530 Church Street, 
Ann Arbor, MI 48109-1043, USA}\par\nopagebreak
  \textit{E-mail address}: \texttt{elduque@umich.edu}

}}
\pgfplotsset{compat=1.12}	
\newtheorem{theorem}{Theorem}
\newtheorem{cor}{Corollary}
\newtheorem{lem}{Lemma}
\newtheorem{prop}{Proposition}
\theoremstyle{definition}
\newtheorem{defn}{Definition}
\newtheorem{notn}{Notation}
\newtheorem{exm}{Example}
\newtheorem{rem}{Remark}
\newcommand{\Hom}{\text{Hom}}
\newcommand{\Id}{\text{Id}}
\newcommand{\End}{\text{End}}
\newcommand{\Ext}{\text{Ext}}
\newcommand{\rank}{\operatorname{rank}}					%l length
\newcommand{\into}{\hookrightarrow}						% Injection
\newcommand{\cA}{\mathcal{A}}
\newcommand{\cB}{\mathcal{B}}
\newcommand{\cH}{\mathcal{H}}
\newcommand{\cL}{\mathcal{L}}
\newcommand{\cM}{\mathcal{M}}
\newcommand{\cS}{\mathcal{S}}
\newcommand{\cU}{\mathcal{U}} 
\newcommand{\cV}{\mathcal{V}}
\newcommand{\B}{\mathbb{B}}
\newcommand{\C}{\mathbb{C}}
\newcommand{\F}{\mathbb{F}}
\newcommand{\K}{\mathbb{K}}
\renewcommand{\P}{\mathbb{P}}
\newcommand{\Q}{\mathbb{Q}}
\newcommand{\R}{\mathbb{R}}
\newcommand{\V}{\mathbb{V}}
\newcommand{\W}{\mathbb{W}}
\newcommand{\Z}{\mathbb{Z}}
\newcommand{\vepsi}{\varepsilon}									% Variable Epsilon
\newcommand{\Ht}[2]{H_{#1}^{\varepsilon,\rho}(#2,\F[t^{\pm 1}])}
\newcommand{\Htc}[2]{H_{#1}^{\varepsilon,\rho}(#2,\C[t^{\pm 1}])}	%Twisted Alexander Modules
\newcommand{\Ft}{\F[t^{\pm 1}]}				%Laurent Polynomial Ring
\newcommand{\f}[5]{
\begin{array}{rcl}
#1:#2 & \longrightarrow & #3 \\
#4 & \longmapsto & #5  \\
\end{array}
}						
\DeclareMathOperator{\GL}{\operatorname{GL}}							% General Linear Group
\DeclareMathOperator{\deti}{\det\nolimits_{\vepsi,\rho}}
\DeclareMathOperator{\Ima}{Im}
\DeclareMathOperator{\id}{Id}										% Stabilizer
\DeclareMathOperator{\Aut}{Aut}										% Automorphism Group 
\DeclareMathOperator{\Gal{Gal}}										% Galois Group
\begin{document}
\title{Twisted Alexander Modules of Hyperplane Arrangement Complements
}
%\subtitle{Do you have a subtitle?\\ If so, write it here}

%\titlerunning{Short form of title}        % if too long for running head

\author{Eva Elduque %etc.
}
\date{\vspace{-5ex}}

\maketitle

\begin{abstract}
We study torsion properties of the twisted Alexander modules of the affine complement $M$ of a complex essential hyperplane arrangement, as well as those of punctured stratified tubular neighborhoods of complex essential hyperplane arrangements. We investigate divisibility properties between the twisted Alexander polynomials of the two spaces, compute the (first) twisted Alexander polynomial of a punctured stratified tubular neighborhood of an essential line arrangement, and study the possible roots of the twisted Alexander polynomials of both the complement and the punctured stratified tubular neighborhood of an essential hyperplane arrangement in higher dimensions. We apply our results to distinguish non-homeomorphic homotopy equivalent arrangement complements. We also relate the twisted Alexander polynomials of $M$ with the corresponding twisted homology jump loci.
\end{abstract}

\bigskip

\noindent\textbf{Keywords } Hyperplane Arrangements $\cdot$ Twisted Alexander Polynomials $\cdot$ Homology Jump Loci

\bigskip
% \PACS{PACS code1 \and PACS code2 \and more}
\noindent\textbf{Mathematics Subject Classification (2010) } 32S22 $\cdot$ 32S20 $\cdot$ 14J17

\section{Introduction}

The twisted Alexander polynomial was first used to study plane algebraic curves by Cogolludo and Florens in \cite{cogo}. In their paper, they refine Libgober's divisibility results regarding the classical Alexander polynomial (\cite{lib3}), and use the twisted Alexander polynomials to distinguish Zariski pairs (pairs of plane curves with homeomorphic tubular neighborhoods but non-homeomorphic complements) that the classical Alexander polynomial cannot distinguish.

Cohen and Suciu study the multivariable twisted Alexander polynomials of the boundary manifold of a line arrangement in \cite{suciu}, and use the non-twisted version to obtain a complete description of the first characteristic variety of the fundamental group of the boundary manifold. Hironaka \cite{hironaka} and Florens-Guerville-Marco \cite{mam} have studied relationships between the topology of a line arrangement complement and that of the boundary manifold of such an arrangement.

In \cite{maxtommy}, Maxim and Wong investigated torsion properties for the twisted Alexander modules of the affine complements of complex hypersurfaces in general position at infinity. They did so by using the link (complement) at infinity, which fibers over a circle and ``dominates'' the hypersurface complement in the sense that the hypersurface complement can be obtained from it by adding cells of dimension greater or equal than the middle dimension. They were also able to describe a polynomial such that the roots of the (one-variable) twisted Alexander polynomials of the hypersurface complements were roots of it. This polynomial came from studying the twisted Alexander modules of the link at infinity.

Kohno and Pajitnov showed in \cite{morse} that complex essential hyperplane arrangements also had a similar structure. Hyperplane arrangements are not necessarily in general position at infinity, but there is a different space $V$ that plays a similar role as the one the link at infinity plays in the case of hypersurfaces in general position at infinity studied by Maxim and Wong. This space  is the boundary of a certain neighborhood of the arrangement, it fibers over a circle, and ``dominates'' the arrangement complement in the sense that the arrangement complement can be obtained from it by adding cells of the middle dimension.

In this paper, we follow Maxim and Wong's approach of using a ``dominating'' space to study the torsion properties for the twisted Alexander modules in the case of complex essential hyperplane arrangement complements, using the structure proved by Kohno and Pajitnov. We investigate divisibility properties between the twisted Alexander polynomials of arrangement complements and those of punctured tubular neighborhoods of arrangements, compute the (first) twisted Alexander polynomial of a punctured stratified tubular neighborhood of an essential line arrangement, and study the possible roots of the twisted Alexander polynomials of both the complement and the punctured stratified tubular neighborhood of an essential hyperplane arrangement in higher dimensions. To be able to do so, we define and study the topology of a punctured stratified tubular neighborhood $W^*$ of the arrangement. In particular, we prove that $W^*$ shares useful ``dominating'' properties with Kohno and Pajitnov's $V$, but, unlike $V$, it is well suited for our computations due the stratified nature of its definition. 

In the last section we give two applications of our results. The first application (\cref{exmfalk}) uses twisted Alexander polynomials to distinguish two non-homeomorphic homotopy equivalent line arrangement complements. The result proved in \cref{exmfalk} was already known: it was first proved by Jiang and Yau in \cite{yau} as a corollary of their powerful result that states that homeomorphic complex projective line arrangement complements have isomorphic intersection posets, and later reproved by Cohen and Suciu in \cite{suciu} using multivariable Alexander polynomials. The interesting part about our proof given in \cref{exmfalk} is that it does not rely on the heavy machinery of Jiang and Yau and it uses an a priori easier invariant than Cohen and Suciu, since multivariable Alexander polynomials are in principle harder to compute than univariable (twisted) ones due to the fact that they do not live in a PID. The second application relates the zeros of twisted Alexander polynomials to the twisted homology jump loci of rank one $\C$-local systems.

\subsection{Setup}
\label{ssdef}

Let $H_j$ be a complex hyperplane in $\C^n$ given by the zero locus of an affine linear map $\xi_j:\C^n\rightarrow \C$, where $j=1,\ldots, m$.

\begin{defn}
The hyperplane arrangement $\mathcal{A}=\{H_1,\ldots,H_m\}$ is called \textit{essential} if the maximal codimension of a non-empty intersection of a subfamily of $\mathcal{A}$ is $n$.
\end{defn}

Let $\{H_1,\ldots,H_m\}$ be an essential hyperplane arrangement, let
$$
H=\bigcup_{j=1}^m H_j
$$
be the union of the hyperplanes, and let
$$
M=\C^n\backslash H
$$
be the complement in $\C^n$.

\begin{rem}
Every hyperplane arrangement complement is homotopy equivalent to the complement of an essential one in an affine space of less or equal dimension (\cite[Proposition 6.1]{morse}), so we do not lose information by restricting ourselves to the study of essential hyperplane arrangements.
\end{rem}

Now, we need to identify and name certain loops in $\pi_1(M)$ that will be used throughout the paper. For a complete algorithm describing a presentation of $\pi_1(M)$, we refer the reader to \cite{arv}. It is a well-known fact that $\pi_1(M)$ is generated by a choice of meridians $a_j$ around each hyperplane $H_j$, for $j=1,\ldots,m$. These meridians $a_1,\ldots,a_m$ have a canonical (positive) orientation induced by the complex structure.

For the rest of \cref{ssdef}, we will deal with the case where $\mathcal{A}$ is a line arrangement (that is, $n=2$). Let $P_1,\ldots, P_s$ be the singular points of $H$, and let $d_k$ be the number of lines in $\cA$ passing through $P_k$.

\begin{defn}
We denote by $M_k$ the local complement
$$
M_k:=M\cap \B^4_k
$$
where $\B^4_k$ is a small 4-ball in $\C^2$ centered at the point $P_k$, for $k=1,\ldots,s$.
\end{defn}

Note that $M_k$ is homotopy equivalent to $M\cap S^3_k$, where $S^3_k$ is the boundary of $\overline{\B^4_k}$. In fact, $M\cap S^3_k=S^3_k\backslash L_k$,
where $L_k$ is a Hopf link with $d_k$ components. Also $M_k$ is naturally homeomorphic to a central line arrangement complement $U_k\subset\C^2$ consisting on $d_k$ distinct lines passing through the origin.

\begin{defn}
We denote by $\beta_k$ the loop in $\pi_1(M_{k})$ corresponding via the homeomorphism described above to a meridian about the line at infinity with \textbf{negative} orientation in $U_{k}$.
\label{defbeta}
\end{defn}

\begin{rem}
Two meridians about the same line with the same orientation are not necessarily the same elements in $\pi_1(M)$, but they are conjugate to one another. The above definition of $\beta_k$ for all $k=1,\ldots, s$ is well defined only up to conjugation in $\pi_1(M_{k})$, but this will suffice for our purposes. Abusing notation and disregarding base points, we will look at the $\beta_k$'s inside of $\pi_1(M)$ via the maps $\pi_1(M_{k})\longrightarrow\pi_1(M)$ induced by inclusion.
\end{rem}

\begin{rem}
$\beta_k$ can be taken to be the composition of $d_k$ loops $\gamma_1\cdot\ldots\cdot\gamma_{d_k}$, where each one of these loops is a certain positively oriented meridian about each of the $d_k$ lines in $\cA$ going through $P_k$. If the reader wishes to know what line of $\cA$ corresponds to each of the $\gamma$'s in a given example, they can do so using Arvola's presentation for $\pi_1(M)$ (\cite{arv}). For the purposes of this paper, we just need to know that a presentation for the fundamental group of $M_k$ (and $S^3_k\backslash L_k$) is given by 
\begin{equation}
\langle \beta_k,\gamma_1,\ldots,\gamma_{d_k-1}\mid [\beta_k,\gamma_l]\text{ \ for }l=1,\ldots,d_k-1\rangle
\label{eqnLink}
\end{equation}
(see \cite[Lemma 2.7]{maxtommy}).
\label{remHopf}
\end{rem}

\subsection{General construction of Alexander modules and polynomials}
\label{ssgen}

Let $\F$ be a field, and let $\V$ be a finite dimensional $\F$-vector space. Let $X$ be a path-connected finite CW complex, let $\rho:\pi_1(X)\rightarrow \GL(\V)$ be a linear representation, and let $\vepsi:\pi_1(X)\rightarrow \Z$ be a group homomorphism. Together, $\rho$ and $\vepsi$ define the homological twisted Alexander modules $\Ht{i}{X}$, as in \cite[Section 2.1]{maxtommy}.

\begin{defn}
The $i$-th (homological) twisted Alexander module $\Ht{i}{X}$ of $(X,\vepsi,\rho)$ is the $i$-th homology of the complex of $\Ft$-modules
$$
C^{\vepsi,\rho}_*(X,\Ft):=(\Ft\otimes_\F \V)\otimes_{\F[\pi_1(X)]} C_*(\widetilde X,\F).
$$
Here, $C_*(\widetilde X,\F)$ is the cellular homology complex of the universal cover $\widetilde X$ of $X$, seen as a free left $\F[\pi_1(X)]$-module via the action given by deck transformations. We regard $\Ft\otimes_\F \V$ as a right $\F[\pi_1(X)]$-module, with the right action given by
$$
(p(t)\otimes v)\cdot \alpha=(p(t)\cdot t^{\vepsi(\alpha)})\otimes (v \cdot \rho(\alpha))
$$
for every $p(t)\in \Ft$, $v\in\V$ and $\alpha\in \pi_1(X)$, where $v$ is regarded as a row vector and $\rho(\alpha)$ as a square matrix.
\label{defntwistmod}
\end{defn}

Together, $\vepsi$ and $\rho$ define a tensor representation
$$
\f{\vepsi\otimes\rho}{\pi_1(X)}{\Aut_{\Ft}(\Ft\otimes_\F \V)}{\alpha}{p(t)\otimes v\mapsto (p(t)\cdot t^{\vepsi(\alpha)})\otimes (v\cdot \rho(\alpha))}
$$
which gives rise to a local system of $\Ft$ modules $\mathcal{L}_{\vepsi,\rho}$.

\begin{rem}
There is an $\Ft$-module isomorphism $\Ht{i}{X}\cong H_i(X,\mathcal{L}_{\vepsi,\rho})$ (see \cite[Section 4.4]{maxtommy}). We will use both the chain complex definition and properties of homology with local systems when it is most convenient.
\end{rem}

Since $X$ is a finite CW-complex and $\V$ is finite dimensional over $\F$, $C_*(\widetilde X, \F)$ is a complex of finitely generated free left $\F[\pi_1(X)]$-modules. Thus, the twisted (homological) Alexander modules are finitely generated $\Ft$-modules over the principal ideal domain $\Ft$, and therefore have a direct sum decomposition into cyclic modules.

\begin{defn}
The $i$-th (homological) twisted Alexander polynomial of $(X,\vepsi,\rho)$ is defined as the order of the torsion part of the $i$-th twisted Alexander module $\Ht{i}{X}$. We denote this polynomial by $\Delta_i^{\vepsi,\rho}(X)$, and it is an element in $\Ft$ that is well defined up to multiplication by a unit of $\Ft$. If $\Ht{i}{X}$ is free, then $\Delta_i^{\vepsi,\rho}(X)=1$ by convention.

Equivalently, $\Delta_i^{\vepsi,\rho}(X)$ can be defined as a generator of the first non-zero Fitting ideal of the $\Ft$-module $\Ht{i}{X}$.

\end{defn}

Let $\vepsi:\pi_1(M)\rightarrow \Z$ be a fixed group homomorphism. Throughout this paper, we will assume that $\vepsi$ is an \textbf{epimorphism}. As we already pointed out, $\pi_1(M)$ is generated by a choice of positively oriented meridians $a_j$ around each hyperplane $H_j$. In fact, $H_1(M,\Z)\cong \Z^m$ is the free abelian group generated by the classes of those meridians. Hence, $\vepsi$ is completely determined by the value it takes in those oriented meridians. We will denote by $\vepsi_j:=\vepsi(a_j)$ for all $j=1,\ldots,m$. In this paper, we will study the twisted Alexander modules $\Ht{*}{M}$, and unless stated otherwise, we require $\vepsi$ to be a \textbf{positive} epimorphism, that is, $\vepsi_j>0$ for all $1\leq j\leq m$.

Throughout the paper, we will use the following notation.
\begin{notn}
Let $\gamma\in \pi_1(M)$. We denote by $\deti(\gamma)$ the determinant
$$
\det(t^{\vepsi(\gamma)}\rho(\gamma)-\Id)\in\Ft.
$$
\label{notndet}
\end{notn}

\subsection{Reidemeister torsion}
\label{sstorsion}

In \cref{sslines}, we will be using the \textbf{torsion} $\tau(C_*)$ of a finite chain complex $C_*$ of finite dimensional vector spaces over a field $\K$, as defined in \cite[Section 3]{milnor} (but we use multiplicative notation instead of additive notation, unlike in \cite{milnor}). The torsion $\tau(C_*)$ is an element of $\K^*/\{\pm 1\}$, and depends on a choice of bases for both the chain complex and its homology. In particular, if $C_*$ is acyclic, $\tau(C_*)$ only depends on a choice of bases for $C_*$. The actual definition of the torsion is not going to be relevant in this paper. The torsion behaves well with respect to short exact sequences, as illustrated in the following result.

\begin{lem}[\cite{milnor}]
Let
$$
0\longrightarrow C' \longrightarrow C \longrightarrow C'' \longrightarrow 0
$$
be a short exact sequence of based finite chain complexes of finite dimensional vector spaces, with compatible bases. Let $\cH$ be the associated long exact sequence in homology, viewed as a based acyclic complex, the bases being the fixed bases of the homology of $C'$, $C$, and $C''$. Then,
$$
\tau(C)=\tau(C')\tau(C'')\tau(\cH)
$$
where the torsion is taken with respect to the fixed bases.
\label{lemtorsion}
\end{lem}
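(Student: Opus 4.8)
The plan is to follow Milnor's original argument in \cite[Section 3]{milnor}, translated into multiplicative notation. Since we work over a field, every short exact sequence of vector spaces splits, and in each degree $n$ we may choose a decomposition $C_n = B_n \oplus \widehat{H}_n \oplus D_n$, where $B_n = \partial C_{n+1}$ is the space of boundaries, $\widehat{H}_n$ is a complement to $B_n$ inside the space of cycles $Z_n$ (so that $\widehat{H}_n \to H_n(C)$ is an isomorphism), and $D_n$ is a complement to $Z_n$ in $C_n$, carried isomorphically by $\partial$ onto $B_{n-1}$. After fixing an auxiliary basis $b_n$ of each $B_n$, one obtains a \emph{preferred} basis of $C_n$, consisting of $b_n$, the lift of the prescribed homology basis of $H_n(C)$ through $\widehat H_n$, and the preimage of $b_{n-1}$ through $\partial|_{D_n}$. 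By definition $\tau(C)$ is the alternating product over $n$ of the change-of-basis determinants from this preferred basis to the given basis of $C_n$, and the usual observation that each $b_n$ enters the product twice, with opposite signs, shows the result is independent of the chosen splittings and of the $b_n$. Recall also that the hypothesis of \emph{compatible bases} says precisely that, in each degree, the given basis of $C_n$ and the juxtaposition of the given basis of $C'_n$ with a lift of the given basis of $C''_n$ differ by a change of basis of determinant $\pm 1$.

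The next step is to perform this construction simultaneously for $C'$, $C$ and $C''$, choosing all splittings compatibly with the short exact sequence. The snake lemma applied to $0 \to C' \to C \to C'' \to 0$ yields compatible exact sequences interlacing the spaces $B'_n, B_n, B''_n$, the cycle spaces $Z'_n, Z_n, Z''_n$, and the homology spaces, and these are exactly the ingredients assembled by the long exact sequence $\cH$. I would then cut $\cH$ at each of its terms into the three families of short exact sequences
$$
0 \to \operatorname{im}\delta_{n+1} \to H_n(C') \to \operatorname{im}\!\big(H_n(C') \to H_n(C)\big) \to 0,
$$
$$
0 \to \operatorname{im}\!\big(H_n(C') \to H_n(C)\big) \to H_n(C) \to \operatorname{im}\!\big(H_n(C) \to H_n(C'')\big) \to 0,
$$
$$
0 \to \operatorname{im}\!\big(H_n(C) \to H_n(C'')\big) \to H_n(C'') \xrightarrow{\delta_n} \operatorname{im}\delta_n \to 0,
$$
with $\delta$ the connecting homomorphism. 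Since change-of-basis determinants are multiplicative along short exact sequences of based vector spaces with compatible bases, $\tau(\cH)$ computed against the prescribed homology bases is the alternating product of the torsions of these pieces, the bases chosen on the shared image groups cancelling in pairs.

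It remains to compute $\tau(C)$, $\tau(C')$ and $\tau(C'')$ against their preferred bases and to compare the result with $\tau(C')\,\tau(C'')\,\tau(\cH)$. One can arrange the auxiliary boundary bases so that $b_n$ restricts to a basis $b'_n$ of $B'_n$ and maps to a basis $b''_n$ of $B''_n$, and similarly for the complements $\widehat H_n$ and $D_n$. With these choices the $D_n$-, $B_n$- and $\widehat H_n$-contributions to $\tau(C)$ each factor as the corresponding contribution for $C'$ times the corresponding contribution for $C''$ times a defect term recording the failure of the associated sequence of cycles or boundaries to be short exact; a bookkeeping argument then identifies the product of all these defect terms with $\tau(\cH)$.

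The main obstacle is precisely this last identification: keeping track of the degree shifts and signs so that the auxiliary boundary factors cancel and the homology factors reassemble into $\tau(\cH)$ with the correct overall sign. I expect the cleanest route is to first reduce, by an induction on $\sum_n \dim C_n$, to two base cases: the case in which one of $C'$, $C''$ is acyclic, where $\cH$ collapses to a sequence of isomorphisms and the identity becomes a direct-sum computation; and the case of a short exact sequence of acyclic complexes, where $\cH$ is trivial and $\tau(C) = \tau(C')\,\tau(C'')$ follows from a short determinant calculation.
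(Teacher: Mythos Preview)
The paper does not prove this lemma; it is quoted from Milnor's \emph{Whitehead torsion} (Theorem~3.2 there) and used as a black box, so there is no in-paper proof to compare against. Your outline is a faithful reconstruction of Milnor's argument: the $B_n \oplus \widehat H_n \oplus D_n$ decomposition, the preferred bases, and the factoring of $\tau(\cH)$ through the short exact sequences obtained by cutting the long exact sequence at each image are exactly what Milnor does. One small remark: Milnor does not reduce by induction on $\sum_n \dim C_n$ to the two base cases you describe; he carries out the bookkeeping directly, and the sign/degree-shift accounting you flag as ``the main obstacle'' is handled by a single explicit alternating-product identity rather than an inductive step. Your inductive reduction would work, but it is a detour relative to the original.
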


Let $(X,\rho,\vepsi)$ be as in \cref{defntwistmod}. By tensoring $C^{\vepsi,\rho}_*(X,\Ft)$ with the field of rational functions $\F(t)$, we construct a finite chain complex of based finite dimensional vector spaces over $\F(t)$, which we call $C^{\vepsi,\rho}_*(X,\F(t))$.

\begin{defn}[{\cite[Section 3]{kirk}}]
We denote by $\tau_{\vepsi,\rho}(X)$ the \textbf{twisted Reidemeister torsion} of $(X,\vepsi,\rho)$, which is defined as
$$
\tau_{\vepsi,\rho}(X)=\tau(C^{\vepsi,\rho}_*(X,\F(t))).
$$
\label{defntwisttor}
\end{defn}

In this definition we have not specified a choice of bases of $C^{\vepsi,\rho}_*(X,\F(t))$, but we will only consider bases of the form $b\otimes c_i$, where $b$ is a basis of $\V$ as a vector space over $\F$ and $c_i$ is a ``geometric'' basis of $C_i(\widetilde X, \F)$ as a free left $\F[\pi_1(X)]$-module, that is, a basis obtained by lifting $i$-cells of $X$ for all $i$. We also have not specified a choice of bases of the homology of $C^{\vepsi,\rho}_*(X,\F(t))$, but in this paper we will only deal with the torsion of acyclic complexes, so we will not need to. The following result explains the indeterminacy of the torsion of such complexes.

\begin{lem}[{\cite[Section 3]{kirk}}]
Suppose that $C^{\vepsi,\rho}_*(X,\F(t))$ is acyclic. Then, $\tau_{\vepsi,\rho}(X)$ is independent of the choice of bases up to multiplication by a unit of $\Ft$.
\label{lemwelldef}
\end{lem}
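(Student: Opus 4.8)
The plan is to reduce the statement to the standard behaviour of Reidemeister torsion under change of bases, and then to compute the change-of-basis determinants that actually occur. Recall from \cite[Section 3]{milnor} that, for a based finite chain complex of finite-dimensional vector spaces over a field equipped with chosen homology bases, replacing the chain basis of some $C_i$, or the chosen basis of some $H_i$, multiplies the torsion by $\det(A)^{\pm 1}$, where $A$ is the corresponding transition matrix and the sign in the exponent depends only on the parity of $i$. Since the units of $\Ft$, namely $\F^*\cdot t^{\Z}$, form a subgroup of $\F(t)^*$, it therefore suffices to check that for the restricted classes of bases we allow, every such transition determinant is a unit of $\Ft$.

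For the chain bases, any two geometric bases $\{c_i\}$ and $\{c_i'\}$ of $C_i(\widetilde X,\F)$ as free $\F[\pi_1(X)]$-modules are obtained by lifting the \emph{same} $i$-cells of $X$, and hence differ only by a reordering of those cells, a change of orientation (sign) of each, and a deck transformation applied to each lift. Consequently the transition matrix $A_i$ over $\F[\pi_1(X)]$ is monomial: each row and each column has a single nonzero entry, of the form $\pm g$ with $g\in\pi_1(X)$. Tensoring with $\Ft\otimes_\F\V$ over $\F[\pi_1(X)]$ replaces each entry $\pm g$ by the $(\dim\V)\times(\dim\V)$ block $\pm\, t^{\vepsi(g)}\rho(g)$, of determinant $\pm\, t^{(\dim\V)\vepsi(g)}\det\rho(g)\in\F^*\cdot t^{\Z}$, since $\rho(g)\in\GL(\V)$. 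Hence the induced transition matrix over $\Ft$ has determinant in $\F^*\cdot t^{\Z}$, a unit of $\Ft$, and so does the alternating product over $i$ with the signs prescribed above.

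For the homology bases --- the only genuinely delicate point --- one uses that $H_i\bigl(C^{\vepsi,\rho}_*(X,\F(t))\bigr)\cong \Ht{i}{X}\otimes_\Ft\F(t)$, and restricts attention to homology bases induced by a basis of the free part of the finitely generated $\Ft$-module $\Ht{i}{X}$; any two such differ by an invertible square matrix over $\Ft$, whose determinant is again a unit of $\Ft$. In particular, when $C^{\vepsi,\rho}_*(X,\F(t))$ is acyclic there is no homology basis to choose and the conclusion already follows from the chain-basis step. Combining the pieces, any admissible change of bases alters $\tau_{\vepsi,\rho}(X)$ only by a unit of $\Ft$.

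The anticipated obstacle is conceptual rather than computational: the assertion is simply false if one permits arbitrary bases of the $\F(t)$-chain groups or of the homology, so the real content is to identify the correct admissible classes --- geometric (cell-lifting) chain bases, and homology bases arising from the free part of the $\Ft$-modules $\Ht{i}{X}$ --- and to check that exactly these force the transition determinants to lie in $\F^*\cdot t^{\Z}$. Granting that framework, the monomial-matrix computation above completes the argument.
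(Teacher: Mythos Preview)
The paper does not supply its own proof of this lemma; it is quoted directly from \cite[Section~3]{kirk} and left unproved. Your argument is essentially the standard one from that reference: geometric lifts of cells differ by signed deck transformations, so the induced change-of-basis matrices over $\Ft$ are block-monomial with blocks $\pm t^{\vepsi(g)}\rho(g)$ and hence have determinant in $\F^*\cdot t^{\Z}$; and for the homology you restrict to bases coming from the free part of the finitely generated $\Ft$-module $\Ht{i}{X}$, whose transition matrices lie in $\GL_r(\Ft)$. This is correct and matches what Kirk--Livingston do.

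One point worth flagging: the paper's phrasing (``we also haven't specified a choice of bases of the homology \ldots\ but we don't have to'') could be read as claiming invariance under \emph{arbitrary} $\F(t)$-bases of the homology, which, as you rightly observe, is false. Your proof makes the needed restriction explicit, which is exactly the convention in \cite{kirk}. You might also mention that a change of the fixed $\F$-basis $b$ of $\V$ contributes a factor in $\F^*$ (you treat the $c_i$ side but not the $b$ side), though this is trivial. Invariance under change of CW structure is a separate, deeper issue (simple-homotopy invariance) that neither you nor the paper addresses, and is not required for the lemma as stated.
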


In light of this last result, we will always consider $\tau_{\vepsi,\rho}(X)$ to be an element of $\F(t)$ up to multiplication by a unit of $\Ft$.

We end this section by stating the relation between the twisted Reidemeister torsion and the twisted Alexander polynomials.

\begin{lem}[{\cite[Theorem 3.4]{kirk}}]
Suppose that $C^{\vepsi,\rho}_*(X,\F(t))$ is acyclic, and let $\tau_{\vepsi,\rho}(X)$ be the twisted Reidemeister torsion of $(X,\vepsi,\rho)$. Then,
$$
\tau_{\vepsi,\rho}(X)=\frac{\prod_i\Delta_{2i+1}^{\vepsi,\rho}(X)}{\prod_i\Delta_{2i}^{\vepsi,\rho}(X)}
$$
up to multiplication by a unit of $\Ft$.
\label{lemtorpol}
\end{lem}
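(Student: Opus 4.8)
The plan is to treat this as a statement of homological algebra over the principal ideal domain $\Ft$, with fraction field $\F(t)$; throughout, ``$\doteq$'' will denote equality up to a unit of $\Ft$. Write $C_* := C^{\vepsi,\rho}_*(X,\Ft)$ for the complex of finitely generated free $\Ft$-modules with its geometric bases $c_i$, and $C^{(t)}_* := C_*\otimes_{\Ft}\F(t)$, so that $\tau_{\vepsi,\rho}(X)=\tau(C^{(t)}_*)$ by \cref{defntwisttor}. Set $Z_i=\ker\partial_i$ and $B_i=\operatorname{Im}\partial_{i+1}$, so that $H_i:=H^{\vepsi,\rho}_i(X,\Ft)=Z_i/B_i$; as submodules of the free module $C_i$ over the PID $\Ft$, the modules $Z_i$ and $B_i$ are free. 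Since $\F(t)$ is a localization of $\Ft$, the functor $-\otimes_{\Ft}\F(t)$ is exact and commutes with forming kernels and images, so $Z^{(t)}_i:=Z_i\otimes\F(t)$ and $B^{(t)}_i:=B_i\otimes\F(t)$ are the cycles and boundaries of $C^{(t)}_*$, and $H_i(C^{(t)}_*)\cong H_i\otimes\F(t)$ is an $\F(t)$-vector space of dimension $b_i:=\rank_{\Ft}H_i$.

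Next I would choose good bases over $\Ft$. By the stacked basis theorem applied to $B_i\subseteq Z_i$, there are an $\Ft$-basis $z^{(i)}_1,\dots,z^{(i)}_{n_i}$ of $Z_i$ and nonzero elements $a^{(i)}_1\mid\cdots\mid a^{(i)}_{s_i}$ of $\Ft$, where $s_i:=\rank B_i=n_i-b_i$, such that $a^{(i)}_1 z^{(i)}_1,\dots,a^{(i)}_{s_i}z^{(i)}_{s_i}$ is an $\Ft$-basis of $B_i$. Hence $H_i\cong\Ft^{b_i}\oplus\bigoplus_k\Ft/(a^{(i)}_k)$, so $\Delta_i^{\vepsi,\rho}(X)\doteq\prod_k a^{(i)}_k$, and the images of $z^{(i)}_{s_i+1},\dots,z^{(i)}_{n_i}$ form a basis of the free quotient of $H_i$. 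I take the images of these elements in $H_i(C^{(t)}_*)$ as the basis used to compute $\tau_{\vepsi,\rho}(X)$; any two such ``$\Ft$-integral'' bases differ by an element of $\GL_{b_i}(\Ft)$, hence change $\tau_{\vepsi,\rho}(X)$ by a unit of $\Ft$ only, in keeping with \cref{lemwelldef}. Finally, fix an $\Ft$-linear splitting $\sigma_i\colon B_{i-1}\to C_i$ of $\partial_i\colon C_i\twoheadrightarrow B_{i-1}$; then $C_i=Z_i\oplus\sigma_i(B_{i-1})$ as free $\Ft$-modules, and $\hat c_i:=\{z^{(i)}_k\}_{k\le n_i}\cup\{\sigma_i(a^{(i-1)}_k z^{(i-1)}_k)\}_{k\le s_{i-1}}$ is an $\Ft$-basis of $C_i$.

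Now I would compute $\tau(C^{(t)}_*)$ from Milnor's formula \cite[Section 3]{milnor}. In each degree I choose the preferred basis $\partial b_{i+1}\cup h_i\cup b_i$ of $C^{(t)}_i$, taking $b_{i+1}\subseteq C^{(t)}_{i+1}$ with $\partial b_{i+1}=\{a^{(i)}_k z^{(i)}_k\}_{k\le s_i}$ (an $\F(t)$-basis of $B^{(t)}_i$), $h_i=\{z^{(i)}_k\}_{s_i<k\le n_i}$ (mapping to the chosen homology basis), and $b_i=\{\sigma_i(a^{(i-1)}_k z^{(i-1)}_k)\}_{k\le s_{i-1}}$. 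This basis agrees with $\hat c_i$ except that $\{z^{(i)}_k\}_{k\le s_i}$ is replaced by $\{a^{(i)}_k z^{(i)}_k\}_{k\le s_i}$, so the determinant of the transition from $\partial b_{i+1}\cup h_i\cup b_i$ to $\hat c_i$ is $\prod_{k\le s_i}a^{(i)}_k\doteq\Delta_i^{\vepsi,\rho}(X)$, while the determinant of the transition from $\hat c_i$ to $c_i$ is a unit of $\Ft$ (a change of basis matrix between two $\Ft$-bases of the free $\Ft$-module $C_i$); hence $[\,\partial b_{i+1}\cup h_i\cup b_i\,/\,c_i\,]\doteq\Delta_i^{\vepsi,\rho}(X)$. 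Milnor's alternating product then gives $\tau(C^{(t)}_*)\doteq\prod_i\Delta_i^{\vepsi,\rho}(X)^{(-1)^{i+1}}=\dfrac{\prod_i\Delta_{2i+1}^{\vepsi,\rho}(X)}{\prod_i\Delta_{2i}^{\vepsi,\rho}(X)}$. (Equivalently, one may split $C^{(t)}_*$ via the short exact sequences $0\to Z^{(t)}_*\to C^{(t)}_*\to B^{(t)}_*[-1]\to 0$ and $0\to B^{(t)}_*\to Z^{(t)}_*\to H_*(C^{(t)}_*)\to 0$ and apply \cref{lemtorsion}; the only nontrivial contributions are the torsions of the acyclic three-term complexes $0\to B^{(t)}_i\to Z^{(t)}_i\to H_i(C^{(t)}_*)\to 0$, each $\doteq\det\operatorname{diag}(a^{(i)}_1,\dots,a^{(i)}_{s_i},1,\dots,1)\doteq\Delta_i^{\vepsi,\rho}(X)$, together with change-of-basis terms between $c_i$ and the compatible bases, which lie in $\GL(\Ft)$.)

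The one genuinely delicate point — and the main obstacle — is the sign and parity bookkeeping: one must fix a single convention for Milnor's torsion and check that the exponents come out as $(-1)^{i+1}$, so that the odd-indexed polynomials land in the numerator rather than the denominator. The cleanest way to pin the convention down is to normalize against an elementary case, e.g.\ $X$ a circle with $C^{(t)}_*$ equal to $\F(t)\xrightarrow{\,t-1\,}\F(t)$, where $H_0$ has order $t-1$, $H_1=0$, and $\tau\doteq(t-1)^{-1}=\Delta_1^{\vepsi,\rho}(X)/\Delta_0^{\vepsi,\rho}(X)$; the sign is then propagated through the computation above (or through the long exact sequences in the alternative argument). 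Everything else is routine: exactness of $-\otimes_{\Ft}\F(t)$, freeness of $Z_i$ and $B_i$, the stacked basis theorem, existence of the splitting $\sigma_i$, and the fact that transition matrices between two $\Ft$-bases of a free $\Ft$-module have unit determinant.
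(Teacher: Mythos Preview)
The paper does not actually give a proof of this lemma: it is stated with an explicit citation to \cite[Theorem 3.4]{kirk} and used as a black box. So there is no ``paper's proof'' to compare against. That said, your argument is correct and is essentially the standard proof (and indeed the one in Kirk--Livingston): pass to the PID $\Ft$, use the stacked-basis theorem on $B_i\subseteq Z_i$ to identify the elementary divisors $a^{(i)}_k$ whose product is $\Delta_i^{\vepsi,\rho}(X)$, and then read off Milnor's alternating product with respect to bases differing from the geometric ones by $\GL(\Ft)$-transitions. Your caveat about the sign convention is well placed; the normalization-by-example you suggest (the circle) is exactly how one pins it down in practice.
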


\subsection{Overview of the main results}

In this paper, we study the twisted Alexander modules and the twisted Alexander polynomials of both $M$ and a punctured stratified tubular neighborhood $W^*$ of $H$, as defined explicitly in \cref{tube}. The motivation behind studying $W^*$ and its relationship with $M$ comes from the fact that, roughly speaking, $W^*$ is constructed by gluing tubular neighborhoods of the different strata of the arrangement and then removing the arrangement, so it is well suited for Mayer-Vietoris type computations.

In \cref{shomotopytype}, we start by recalling a result from \cite{morse} (\cref{tocho} in this paper) involving a space $V$ which is the boundary of a certain neighborhood of the arrangement $H$ which fibers over a circle. This result will be very useful for proving the main result in that section, namely \cref{stratocho}, which relates the topologies of $M$ and $W^*$.

\begin{restatable*}{theorem}{stratocho}
$M$ has the homotopy type of $W^*$ with cells of dimension $\geq n$ attached.
\label{stratocho}
\end{restatable*}

In \cref{storsion} we study the torsion properties of the twisted Alexander modules of both $M$ and $W^*$, using the space $V$ and the fibration structure we know by \cref{tocho} to do so. The main result in this section is \cref{torsion}, which is the hyperplane arrangement extension of \cite[Theorem 4.1, Corollary 4.4]{maxtommy}. Note that \cite[Theorem 4.1, Corollary 4.4]{maxtommy} applies to Alexander modules of complements of hypersurfaces in general position at infinity, and hyperplane arrangements are not necessarily in general position at infinity. 

\begin{restatable*}{theorem}{torsion}
The twisted Alexander modules $\Ht{i}{M}$ are torsion $\Ft$-modules for every $0\leq i\leq n-1$, they are trivial modules for $i> n$, and $\Ht{n}{M}$ is a free $\Ft$-module of rank $(-1)^n\cdot\dim_\F(\V)\cdot\chi(M)$
\label{torsion}
\end{restatable*}

In the proof of this last result, and making use of \cref{stratocho}, we will also arrive at \cref{corroots}, which gives us a divisibility result.

\begin{restatable*}{cor}{corroots}
$\Ht{i}{M}$ and $\Ht{i}{W^*}$ are torsion $\Ft$-modules for any $0\leq i \leq n-1$. Moreover, their twisted Alexander polynomials $\Delta_i^{\vepsi,\rho}(M)$ and  $\Delta_i^{\vepsi,\rho}(W^*)$ coincide for $0\leq i <n-1$, and $\Delta_{n-1}^{\vepsi,\rho}(M)$ divides $\Delta_{n-1}^{\vepsi,\rho}(W^*)$.
\label{corroots}
\end{restatable*}

Finally, in \cref{sroots} we study the twisted Alexander polynomials of $W^*$, which also give us information about the twisted Alexander polynomials of $M$ by the divisibility result of \cref{storsion}, namely \cref{corroots}. This is easier to do in the case of line arrangements (\cref{sslines}), where we are able to find an explicit formula for $\Delta_1^{\vepsi,\rho}(W^*)$ as in the following result.

\begin{restatable*}{theorem}{alexlines}
Let $\cA=\{H_1,\ldots,H_m\}\subset \C^2$ be an essential line arrangement, $H=\bigcup\limits_{i=1}^m H_i$, $M=\C^2\backslash H$ and let $W^*$ be a punctured stratified tubular neighborhood of $H$. Let $P_1,\ldots,P_s$ be the singular points of $H$, let $s_i$ be the number of singular points of $H$ on $H_i$, and let $d_k$ be the number of lines of $\cA$ going through the singular point $P_k$. Let $a_1,\ldots,a_m$ be the generators of $\pi_1(M)$ as described in \cref{ssdef}, and $\beta_k$ as described in \cref{defbeta}. Then, following \cref{notndet}, we have
\begin{enumerate}
\item $\Delta_1^{\vepsi,\rho}(W^*)=\left(\prod\limits_{k=1}^s \deti(\beta_k)^{d_k-2}\right)\cdot\left(\prod\limits_{i=1}^m\deti(a_i)^{s_i-1}\right)\cdot\Delta_0^{\vepsi,\rho}(M)$.
\item $\Delta_1^{\vepsi,\rho}(M)$ divides
$$\left(\prod_{k=1}^s \deti(\beta_k)^{d_k-2}\right)\cdot\left(\prod_{i=1}^m\deti(a_i)^{s_i-1}\right)\cdot\gcd_{i=1,\ldots,m}\left\{\deti(a_i)\right\}.$$
\label{div}
\end{enumerate}
\label{alexlines}
\end{restatable*}

Note that \cref{corroots} and \cref{alexlines} refine the divisibility result obtained in \cite[Theorem 1.1]{cogo} (where $\rho$ is further assumed to be unitary), since the polynomial which $\Delta_1^{\vepsi,\rho}(M)$ divides in \cite[Theorem 1.1]{cogo} also contains contributions from the link at infinity. In some cases, we will be able to further refine the bound for $\Delta_1^{\vepsi,\rho}(M)$ given by \cref{alexlines}, part \ref{div}, as shown in the other main result of \cref{sslines}.

\begin{restatable*}{theorem}{linestube}
Let $\cB$ be the set of lines in $\cA=\{H_1,\ldots,H_m\}$ such that for each line in $\cB$ no other line in $\cA$ is parallel to it. Suppose that $\cB=\{H_1,\ldots, H_l\}\neq\emptyset$. Let $s_i$ be the number of singular points of $H$ in $H_i$, which we denote by $P_1^i,\ldots,P_{s_i}^i$, and let $d_k^i$ be the number of lines of $\cA$ going through the singular point $P_k^i$. Let $a_1,\ldots,a_m$ be the generators of $\pi_1(M)$ as described in \cref{ssdef}, and let $\beta_{k,i}$ be the resulting loop from composing all of the meridian loops around lines in $\cA$ going through the singular point $P_k^i$ as in \cref{defbeta} (and \cref{remHopf}). Then, $\Delta_1^{\vepsi,\rho}(M)$ divides
$$
\left(\gcd_{r=1,\ldots, m}\{\deti(a_r)\}\right)\cdot\gcd_{i=1,\ldots,l}\left\{\left(\prod_{k=1}^{s_i} \deti(\beta_{k,i})^{d_k^i-2}\right)\cdot\deti(a_i)^{s_i-1}\right\}.
$$
\label{linestube}
\end{restatable*}

In the higher dimensional case discussed in \cref{ssgeneralcase}, we use the natural stratification of $H$ to obtain an open cover of $W^*$, namely 
$$\{\mathcal{S}_l^k \mid k=0,\ldots,n-1; l=1,\ldots,s_k\}$$
such that each one of the open sets in the cover fibers over the corresponding stratum $\Sigma_l^k$ of dimension $k$ of $H$. Then, we use the Mayer-Vietoris cohomology spectral sequence for the twisted Alexander modules associated to this open cover to get a bound for the twisted Alexander polynomials $\Delta_i^{\vepsi,\rho}(M)$, and arrive at the following result, which generalizes \cref{alexlines}.

\begin{restatable*}{theorem}{rootsgeneral}
Let $\mathcal{A}=\{H_1,\ldots,H_m\}$ be an essential hyperplane arrangement in $\C^n$, with the natural induced stratification $\{\Sigma_l^k \mid k=0,\ldots,n-1; l=1,\ldots,s_k\}$, and let $M$ be the complement of that arrangement in $\C^n$. For every $k$ and $l$, let $F_{l,k}$ be the fiber of the fibration $\cS_{l}^k\longrightarrow \Sigma_l^k$ and let $\gamma_\infty(F_{l,k})$ be a meridian around the hyperplane at infinity in $\C\P^{n-k}$ with positive orientation, where $F_{l,k}$ is naturally seen in $\C\P^{n-k}$. Then, for any $i=0,\ldots,n-1$, the zeros of the $i$-th Alexander polynomial of $M$ (i.e. $\Delta_i^{\vepsi,\rho}(M)$) are among those of
$$
\prod_{k=0}^{n-1}\prod_{l=1}^{s_k}\deti(\gamma_\infty(F_{l,k})).
$$
\label{rootsgeneral}
\end{restatable*}

\section{The homotopy type of M}
\label{shomotopytype}

We will study the topology of $M$ with the help of two functions ($f_\vepsi$ and $g_\vepsi$) defined from the fixed positive epimorphism $\vepsi:\pi_1(M)\rightarrow \Z$ as follows:
$$
\f{f_\vepsi}{M}{\R}{z}{\displaystyle\prod_{j=1}^m \mid \xi_j(z)|^{\vepsi_j}}
$$
$$
\f{g_\vepsi}{M}{\R/2\pi\Z\cong S^1}{z}{\displaystyle\arg\left(\prod_{j=1}^m \xi_j(z)^{\vepsi_j}\right)=\displaystyle\sum_{j=1}^m \vepsi_j\cdot \arg(\xi_j(z))}
$$

Let $\delta>0$ small enough, let $V:= f_{\vepsi}^{-1}(\delta)$. The following result can be found in \cite[Theorem 2.3]{morse}. In the original statement of this theorem in \cite{morse}, one of the hypotheses is that $(\vepsi_1,\ldots,\vepsi_m)$ is of ``rank 1'', that is, that $\{\vepsi_1,\ldots,\vepsi_m\}$ span a $1$-dimensional $\Q$-vector space over $\R$, which is automatically satisfied in our case.% since $\vepsi$ is an epimorphism onto $\Z$.
\\
\begin{theorem}
For every $\delta>0$ small enough, we have that
\begin{enumerate}
\item $V$ is a smooth manifold of dimension $2n-1$.
\item The inclusion $V\into f_{\vepsi}^{-1}\left((0,\delta]\right)$ is a homotopy equivalence. \label{tochohom}
\item The map ${g_\vepsi}_{| V}:V\rightarrow S^1$ is a fiber bundle, and the fiber $F$ has the homotopy type of a finite CW-complex of dimension $n-1$.\label{fiber}
\item $M$ has the homotopy type of $V$ with $|\chi(M)|$ cells of dimension $n$ attached.\label{hom}
\end{enumerate}
\label{tocho}
\end{theorem}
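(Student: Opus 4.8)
The plan is to encode everything through the single polynomial map $Q:=\prod_{j=1}^m\xi_j^{\vepsi_j}\colon\C^n\to\C$, which makes sense because the $\vepsi_j$ are positive integers. Then $f_\vepsi=|Q|$, $g_\vepsi=\arg Q$, $M=\C^n\setminus Q^{-1}(0)$, $V_\delta=Q^{-1}(S^1_\delta)$ where $S^1_\delta\subset\C$ is the circle of radius $\delta$, and $f_\vepsi^{-1}((0,\delta])=Q^{-1}(\overline D_\delta\setminus\{0\})$. So the theorem becomes a package of statements about the restricted map $Q\colon M\to\C^*$ near the value $0$: in essence, that $0$ is a typical value of $Q|_M$, together with the standard consequences.

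First I would record the easy computation that the critical points of $f_\vepsi=|Q|$ on $M$ are exactly the zeros in $M$ of the logarithmic one-form $\omega_\vepsi:=\sum_j\vepsi_j\,d\xi_j/\xi_j$: indeed $d(Q\overline Q)=0$ forces, by taking $(1,0)$-parts and using $Q\neq 0$ on $M$, that $\partial Q=0$, i.e. $\omega_\vepsi=0$. By the resolution of Varchenko's conjecture for essential arrangements with positive weights (Orlik--Terao), these zeros are finite in number and non-degenerate, and they all lie in $\{f_\vepsi>0\}$; hence for $\delta$ small there are no critical points of $f_\vepsi$ on $V_\delta$, which is part (1). The substantial input is then the claim that $0$ is a typical value of $Q|_M$, i.e. that $Q\colon Q^{-1}(\overline D_\delta\setminus\{0\})\to\overline D_\delta\setminus\{0\}$ is a locally trivial fibration for $\delta$ small; since $Q|_M$ is not proper ($H$ is unbounded), Ehresmann does not apply directly and this needs a tameness-at-infinity (Malgrange-type) argument. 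Granting it, parts (2) and (3) are formal: $\overline D_\delta\setminus\{0\}$ deformation retracts onto $S^1_\delta$ and the retraction lifts through the fibration, giving the homotopy equivalence $V_\delta\hookrightarrow f_\vepsi^{-1}((0,\delta])$ of (2); the restriction $Q\colon V_\delta=Q^{-1}(S^1_\delta)\to S^1_\delta$ is the induced fiber bundle and $g_\vepsi$ is just the identification $S^1_\delta\cong S^1$, while the fiber $F=Q^{-1}(\delta)$ is a smooth affine hypersurface of $\C^n$ and so has the homotopy type of a finite CW complex of dimension at most $n-1$ by the Andreotti--Frankel theorem for affine varieties.

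For part (4) I would first show $M\simeq f_\vepsi^{-1}([\delta,\infty))$: writing $M=Q^{-1}(\overline D_\delta\setminus\{0\})\cup_{V_\delta}Q^{-1}(\C\setminus D_\delta)$ and using that the first piece deformation retracts onto $V_\delta$ by (2), a standard gluing argument collapses $M$ onto $N:=f_\vepsi^{-1}([\delta,\infty))=Q^{-1}(\C\setminus D_\delta)$. Then I would run Morse theory on the manifold-with-boundary $N$ for the globally defined function $\varphi:=\log f_\vepsi=\sum_j\vepsi_j\log|\xi_j|$, whose critical points are precisely the zeros of $\omega_\vepsi$, all of $\varphi$-value $>\log\delta$ for $\delta$ small; each such point is non-degenerate of Morse index exactly $n$, since near it $Q$ has a holomorphic normal form $Q=c+\sum w_i^2$ with $c=1/Q(z_0)\neq 0$, so the quadratic part of $\operatorname{Re}\log Q$ has eigenvalues $\pm 2|c|$ in each complex coordinate, i.e. signature $(n,n)$. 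Sweeping $\varphi$ upward from $\partial N=V_\delta$ past these critical values attaches one $n$-cell per critical point, so $M\simeq N\simeq V_\delta$ with (number of zeros of $\omega_\vepsi$) cells of dimension $n$ attached. Finally, $V_\delta$ fibers over $S^1$ by (3), so $\chi(V_\delta)=0$ and the number of attached cells must equal $(-1)^n\chi(M)$; this is $|\chi(M)|$ because $(-1)^n\chi(M)\ge 0$ for arrangement complements (for real essential arrangements it is the number of bounded chambers), which also pins the Orlik--Terao count of zeros of $\omega_\vepsi$ to $|\chi(M)|$, and part (4) follows.

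The main obstacle is the behavior at infinity. Since $H$ is unbounded, the level sets of $f_\vepsi$ in $M$ are non-compact and $Q|_M$ is not proper: in the fibration argument behind (2)--(3) Ehresmann fails, and in the handle decomposition behind (4) one cannot a priori conclude that $\varphi^{-1}([c,\infty))$ retracts onto $\varphi^{-1}(c)$ once $c$ is above all critical values. What must be shown is that $Q$ (equivalently $\varphi$) has no critical points at infinity near $0$ -- e.g. a Malgrange-type bound $\|z\|\cdot\|d\varphi(z)\|\not\to 0$ as $z\to\infty$ in $\{f_\vepsi\le\delta\}$ -- so that the relevant gradient trajectories on compact value-intervals stay inside compact subsets of $M$. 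Establishing this tameness is exactly where essentiality of $\mathcal{A}$ and positivity of $\vepsi$ are genuinely used, and it is the technical heart of the theorem; the finiteness, non-degeneracy and exact count of the zeros of $\omega_\vepsi$ are taken as input from the theory of master functions, and the remaining steps -- the retractions, the index computation, and the Euler-characteristic bookkeeping -- are then routine.
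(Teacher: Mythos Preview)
The paper does not prove this theorem; it is quoted from Kohno--Pajitnov \cite[Theorem~2.3]{morse} without argument, so there is no in-paper proof to compare your proposal against. Your outline is nonetheless a faithful sketch of how that reference proceeds: the circle-valued Morse theory for $g_\vepsi=\arg Q$ used there is equivalent, via the Cauchy--Riemann equations, to the real-valued Morse theory for $\varphi=\log f_\vepsi$ that you run, and you correctly flag tameness at infinity as the one substantive ingredient requiring real work.

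Two remarks. First, in your index computation the line ``$c=1/Q(z_0)$'' is a slip; you mean $c=Q(z_0)$ (the rest of the signature-$(n,n)$ argument is fine). Second, the Orlik--Terao/Varchenko input gives that the zeros of $\omega_\vepsi$ are isolated and number $|\chi(M)|$ with multiplicity for an essential arrangement, but \emph{non-degeneracy} is guaranteed only for generic weight vectors, not for every positive integer $\vepsi$. This is not fatal --- one may perturb to nearby generic real weights without altering the homotopy types of $M$ or $V_\delta$, and all critical points of the perturbed $\varphi$ then have index exactly $n$ --- but your sentence ``these zeros are finite in number and non-degenerate'' overstates what comes for free at the given $\vepsi$. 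With that patched, your sketch is sound.
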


\begin{rem}
Note that the space $V$ depends on both $\delta$ and the homomorphism $\vepsi$.
\label{remdependence}
\end{rem}

\cref{tocho} gives us some good properties of $f_{\vepsi}^{-1}\left((0,\delta]\right)$, which we will be using in \cref{storsion}. However, those properties alone will not be enough for us to compute possible roots of the twisted Alexander polynomials of $M$. The rest of this section is devoted to describe a different neighborhood of the arrangement with a nice stratification and prove some properties about it that will come in handy in \cref{sroots}.

We stratify our hyperplane arrangement in the natural way: two points $P$ and $P'$ in $H$ lie in the same stratum if the collections of hyperplanes in the arrangement containing $P$ and $P'$ coincide. Each stratum is a smooth submanifold of $\C^n$. We define a neighborhood $W$ of $H$ inductively as follows. Let $\Sigma_k$ the union of strata of dimension $k$ in $H$. For each stratum of dimension $0$, we pick a ball of radius $\delta_0$ around it, and call $W(\delta_0)$ the union of those balls. Now, we take a tubular neighborhood of $\Sigma_1\backslash \overline{W\left(\frac{\delta_0}{2}\right)}$ of radius $\delta_1<\delta_0$, and define $W(\delta_0,\delta_1)$ as the union of $W(\delta_0)$ with this tubular neighborhood that we have just described. Now, we take a tubular neighborhood of $\Sigma_2\backslash \overline{W\left(\frac{\delta_0}{2},\frac{\delta_1}{2}\right)}$ of radius $\delta_2<\delta_1$ and create $W(\delta_0,\delta_1,\delta_2)$. We proceed inductively until we reach $W:=W(\delta_0,\ldots,\delta_{n-1})$.

Note that, when all of the $\delta$'s are small enough, all of these neighborhoods that we have defined are homeomorphic. \textbf{From now on, we will assume that all of the $\delta$'s are small enough, and will not specify them.}

\begin{defn}
We call $W$ a stratified tubular neighborhood of $H$. We let $W^*=W\backslash H$, and call it a \textbf{punctured stratified tubular neighborhood of $H$}.
\label{tube}
\end{defn}

\begin{rem}
$W^*$ is homotopy equivalent to $\partial W$.%, when seeing $W$ as a subset of $\C^n$.
\label{rmkboundary}
\end{rem}

The following theorem relates the topologies of $W^*$ and $M$.

\stratocho

The proof of this theorem is an immediate consequence of the following proposition.

\begin{prop}
Let $j:W^*\hookrightarrow M$ be the inclusion. Then
\begin{enumerate}
    \item $j_*:\pi_i(W^*)\longrightarrow \pi_i(M)$ is an isomorphism for $i<n-1$.
    \item $j_*:\pi_{n-1}(W^*)\longrightarrow \pi_{n-1}(M)$ is an epimorphism.
\end{enumerate}
\label{prophomotopy}
\end{prop}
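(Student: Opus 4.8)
The plan is to deduce the isomorphism/epimorphism statements on homotopy groups from a stratified Morse-theoretic or handle-decomposition comparison between $M = \C^n \setminus H$ and the punctured stratified tubular neighborhood $W^*$. The key geometric input is that $\C^n$ decomposes as $W \cup (\overline{\C^n \setminus W})$, where $\overline{\C^n \setminus W}$ is a manifold with corners on which $H$ does not meet, so that $M \cap \overline{\C^n \setminus W} = \overline{\C^n \setminus W}$ deformation retracts onto a core that avoids $H$ entirely. Thus $M$ is obtained, up to homotopy, from $W^* = W \setminus H$ by attaching the cells coming from $\overline{\C^n \setminus W}$. First I would make this precise: build a CW structure (or handle decomposition) on the pair $(M, W^*)$ in which the relative cells all have dimension $\geq n$. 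The natural way to get the dimension bound is to observe that $\overline{\C^n \setminus W}$ is an $2n$-manifold with boundary that deformation retracts onto its cut locus / a spine, but one needs the spine relative to $\partial W$ to be built out of cells of dimension $\geq n$; this is exactly the phenomenon underlying \cref{tocho}\ref{hom}, where $M$ is built from $V_\delta$ by attaching $n$-cells, and I would try to run the same argument adapted to the stratified neighborhood $W$ instead of the level set $f_\vepsi^{-1}(\delta)$.

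Once the pair $(M, W^*)$ is known to be homotopy equivalent to a relative CW pair with cells only in dimensions $\geq n$, the proposition is a formal consequence. Indeed, for a CW pair $(Y, A)$ with $Y$ obtained from $A$ by attaching cells of dimension $\geq n$, the inclusion $A \hookrightarrow Y$ induces isomorphisms on $\pi_i$ for $i < n-1$ and an epimorphism on $\pi_{n-1}$: this follows from cellular approximation, since any map of a sphere $S^i$ with $i \le n-1$ into $Y$ can be pushed off the interiors of the attached cells (each of dimension $\geq n > i$ when $i \le n-1$) and hence homotoped into $A$, giving surjectivity on $\pi_{n-1}$ and on $\pi_i$ for $i < n-1$; and any nullhomotopy in $Y$ of a map from $S^i$, $i < n-1$, is a map of an $(i+1)$-disk with $i+1 \le n-1 < n$, which again can be pushed into $A$, giving injectivity for $i < n-1$. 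I would need $W^*$ and $M$ to have the homotopy type of finite CW complexes for cellular approximation to apply cleanly, which holds: $M$ is a finite CW complex by Orlik--Solomon / the arrangement being affine, and $W^*$ is homotopy equivalent to the compact manifold $\partial W$ by \cref{rmkboundary}.

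Concretely the steps in order are: (1) realize $\C^n$ as $W \cup N$ where $N = \overline{\C^n \setminus W}$, observe $H \subset \Int W$ so $M \cap N = N$ and $M = W^* \cup_{\partial W} N$ (gluing along $\partial W \simeq W^*$); (2) show $N$ retracts, rel $\partial W$, onto a subcomplex built from $\partial W$ by attaching cells of dimension $\geq n$ --- here I would use a proper Morse function on $N$ with no critical points of index $< n$, exactly paralleling the Lefschetz-type argument behind \cref{tocho}, or alternatively invoke that $N$ is a regular neighborhood complement and use Poincaré--Lefschetz duality to bound the handle indices; (3) conclude that $M \simeq W^* \cup (\text{cells of dim} \geq n)$; (4) apply the cellular approximation argument above to get the $\pi_i$-isomorphism for $i < n-1$ and $\pi_{n-1}$-surjectivity. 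The main obstacle is step (2): producing the handle decomposition of $N$ with all indices $\geq n$, i.e.\ showing that passing from $\partial W$ into the complement of the neighborhood only adds high-dimensional cells. I expect this to require a careful stratified Morse theory argument on $f_\vepsi$ (or a related distance-to-$H$ function) restricted to $N$, using essentiality of $\cA$ to rule out low-index critical points, in the same spirit as Kohno--Pajitnov's proof of \cref{tocho}; the rest of the proof is soft and formal.
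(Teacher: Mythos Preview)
Your outline is logically sound, and step (4) is correct and routine. The genuine gap is exactly the one you flag: step (2). You propose to build a handle decomposition of $N=\overline{\C^n\setminus W}$ relative to $\partial W$ with all handles of index $\geq n$, ``paralleling'' Kohno--Pajitnov. But this is not a minor adaptation: the function $f_\vepsi$ has no reason to be Morse on $N$ or to have its critical structure controlled by the stratified boundary $\partial W$ rather than by the smooth level set $V_\delta$, and a distance-to-$H$ function is even less tractable. In effect you are proposing to reprove the hard content of \cref{tocho} with $V_\delta$ replaced by $\partial W$, and you have not indicated how essentiality of $\cA$ would enter that argument. As written, step (2) is an aspiration rather than an argument.

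The paper avoids this difficulty entirely by a sandwiching trick. Rather than doing any new Morse theory, it finds stratified tubular neighborhoods $W'\subset W$ (homotopy equivalent after removing $H$) and a $\delta>0$ with $W'\setminus H\subset f_\vepsi^{-1}((0,\delta])\subset W^*$. Then one has the chain of inclusion-induced maps
\[
\pi_i(W'\setminus H)\xrightarrow{a_i}\pi_i\bigl(f_\vepsi^{-1}((0,\delta])\bigr)\xrightarrow{b_i}\pi_i(W^*)\xrightarrow{c_i}\pi_i(M),
\]
where $b_i\circ a_i$ is an isomorphism (the two neighborhoods are homotopy equivalent) and $c_i\circ b_i$ is an isomorphism for $i<n-1$ and a surjection for $i=n-1$ (this is \cref{tocho}, already proved by Kohno--Pajitnov). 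An elementary diagram chase then gives the proposition. Note that in the paper's logical order the statement you are after is used to \emph{deduce} that $M$ is built from $W^*$ by attaching cells of dimension $\geq n$ (\cref{stratocho}), whereas your step (2) essentially assumes that conclusion; the sandwiching argument is what lets the paper bootstrap from $V_\delta$ to $W^*$ without redoing the Morse theory.
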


\begin{proof}
The outline of the proof is going to be the following. First, we will find two stratified tubular neighborhoods $W$ and $W'$ of $H$ and a $\delta >0$ such that $W'\subset f_\vepsi^{-1}([0,\delta])\subset W$. Then, we will get the result about $W^*$ from the information about $f_{\vepsi}^{-1}\left((0,\delta]\right)$ that we know from \cref{tocho} and the fact that the inclusion $W'\backslash H\hookrightarrow W\backslash H$ is a homotopy equivalence.

Let us start with a stratified tubular neighborhood $W=W(\delta_0,\ldots,\delta_{n-1})$, and let $\delta'$ be the minimum of the $\delta_i$'s. We have that every point that is at distance less than $\delta'$ of $H$ is contained in $W$. Also note that the factors defining $f_{\vepsi}(z)$ are all proportional to a positive power of the distance of a point z to the hyperplane defined by that factor. Hence, for sufficiently small $\delta$, $f_\vepsi^{-1}([0,\delta])$ will be contained in the set of points on $\C^n$ that are at distance less than $\delta'$ of $H$, which is in turn contained in $W$. Thus, we have found $\delta$ such that $f_\vepsi^{-1}([0,\delta])\subset W$.

Let us find a stratified tubular neighborhood $W'$ of $H$ such that $W'\subset f_\vepsi^{-1}([0,\delta])$ to complete the first part of our outline of the proof. This $W'$ is constructed by taking the union of tubular neighborhoods of open sets of the strata like in \cref{tube}, but not requiring those tubular neighborhoods to have a fixed radius. These ``generalized'' stratified tubular neighborhoods are still homotopy equivalent to the ones in \cref{tube}. It is straightforward to see that we can find one such $W'$ inside of $f_\vepsi^{-1}([0,\delta])$. In particular, we have that $W'\backslash H\subset f_{\vepsi}^{-1}\left((0,\delta]\right)\subset W\backslash H=W^*$. 

Let us look at the following diagram, where all of the arrows are induced by inclusions.

\begin{equation}
\pi_i(W'\backslash H)\overset{a_i}{\longrightarrow} \pi_i(f_{\vepsi}^{-1}\left((0,\delta]\right))\overset{b_i}{\longrightarrow} \pi_i(W^*)\overset{c_i}{\longrightarrow}\pi_i(M)
\label{eqhomotopy}
\end{equation}

Since the inclusion from $W'\backslash H$ to $W^*$ is a homotopy equivalence, we have that $b_i\circ a_i$ is an isomorphism for all $i$. In particular, $b_i$ is an epimorphism for all $i$. Also, by \cref{tocho}, parts \ref{tochohom} and \ref{hom}, we have that $c_i\circ b_i$ is an isomorphism if $i<n-1$ and an epimorphism if $i=n-1$. In particular, $b_i$ is a monomorphism if $i<n-1$, and $c_i$ is an epimorphism for $i\leq n-1$. This concludes the proof of the second assertion of the proposition.

Since we already know that $b_i$ is an epimorphism for all $i$ and a monomorphism if $i<n-1$, we find that $b_i$ is an isomorphism if $i<n-1$. Since $c_i\circ b_i$ is an isomorphism for $i<n-1$, we get that $c_i$ is an isomorphism for $i<n-1$, and this concludes the proof of the first assertion of the proposition.
\end{proof}

\section{Torsion properties of the twisted Alexander modules}\label{storsion}

From now on, we fix $\delta>0$ small enough so that \cref{tocho} holds. Let $j:V\hookrightarrow M$ be the inclusion, and $j_*:\pi_1(V)\rightarrow \pi_1(M)$ be the map it induces on fundamental groups. Abusing notation, we will also denote by $\vepsi$ and $\rho$ the induced maps on $\pi_1(V)$ that we get by composing $j_*$ with $\vepsi$ and $\rho$ respectively.
\\
\begin{prop}
Let $n\geq 2$. The inclusion map $j:V\hookrightarrow M$ induces isomorphisms of $\Ft$-modules
$$
\Ht{i}{V}\xrightarrow{\cong}\Ht{i}{M}
$$
for any $i<n-1$, and an epimorphism of $\Ft$-modules
$$
\Ht{n-1}{V}\twoheadrightarrow\Ht{n-1}{M}.
$$
\label{dom}
\end{prop}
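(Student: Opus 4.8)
The plan is to promote the homotopy-theoretic information of \cref{tocho}, part \ref{hom} — that $M$ is obtained from $V$ by attaching cells of dimension $n$ — into a statement about the twisted Alexander modules. Since $M$ is built from $V$ by attaching only $n$-cells, the pair $(M,V)$ has relative cellular chain complex (of the universal covers, as $\F[\pi_1(M)]$-modules) concentrated in degree $n$. This is the key structural input, and everything else is homological algebra.

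First I would fix a CW structure on $M$ extending one on $V$, so that $C_*(\widetilde{M},\widetilde{V};\F)$ is a complex of free $\F[\pi_1(M)]$-modules concentrated in degree $n$ (free on the attached $n$-cells). Tensoring the short exact sequence of chain complexes
$$
0\longrightarrow C_*^{\vepsi,\rho}(V,\Ft)\longrightarrow C_*^{\vepsi,\rho}(M,\Ft)\longrightarrow C_*^{\vepsi,\rho}(M,V;\Ft)\longrightarrow 0
$$
(which stays exact because each term of $C_*(\widetilde{M},\widetilde{V};\F)$ is free, hence flat, over $\F[\pi_1(M)]$, so the functor $(\Ft\otimes_\F\V)\otimes_{\F[\pi_1(M)]}-$ is exact here) with the observation that the relative complex vanishes outside degree $n$. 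Then I would write down the associated long exact sequence of twisted Alexander modules
$$
\cdots\to H_{i+1}^{\vepsi,\rho}(M,V,\Ft)\to \Ht{i}{V}\to \Ht{i}{M}\to H_i^{\vepsi,\rho}(M,V,\Ft)\to\cdots
$$
Here I need to be slightly careful: pulling back $\vepsi$ and $\rho$ along $j_*$ is exactly what makes $V$'s twisted complex a subcomplex of $M$'s, which is why the abuse of notation set up just before the statement is legitimate; and the relative twisted homology $H_i^{\vepsi,\rho}(M,V,\Ft)$ is $0$ for $i\neq n$ and for $i = n$ it is the subgroup of $(\Ft\otimes_\F\V)\otimes_{\F[\pi_1(M)]}C_n(\widetilde M,\widetilde V;\F)$ consisting of cycles — in particular it is a submodule of a free $\Ft$-module, hence free, but I will not even need that.

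With the relative groups vanishing in degrees $i$ and $i+1$ for all $i \leq n-2$, the long exact sequence immediately forces $\Ht{i}{V}\xrightarrow{\cong}\Ht{i}{M}$ for $i < n-1$. For $i = n-1$, the relevant piece of the long exact sequence reads
$$
H_n^{\vepsi,\rho}(M,V,\Ft)\longrightarrow \Ht{n-1}{V}\longrightarrow \Ht{n-1}{M}\longrightarrow H_{n-1}^{\vepsi,\rho}(M,V,\Ft)=0,
$$
so $\Ht{n-1}{V}\twoheadrightarrow\Ht{n-1}{M}$ is surjective, as claimed. Finally I would note that all maps appearing are maps of $\Ft$-modules because the whole short exact sequence of chain complexes is one of complexes of $\Ft$-modules, and that the map induced on homology by the chain-level inclusion is the map induced by the topological inclusion $j$.

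The only real subtlety — the step I would be most careful about — is the claim that $C_*^{\vepsi,\rho}(M,V;\Ft)$ is concentrated in degree $n$. This rests on choosing the relative CW structure so that the only relative cells are in dimension $n$, which is precisely the content of \cref{tocho}(\ref{hom}) (note we may need $n\geq 2$ here so that, e.g., there is no issue with basepoints / path-connectedness of $V$, which is why the hypothesis $n\geq 2$ appears). Once the relative chain complex is free and concentrated in a single degree, exactness of the tensored sequence and the resulting long exact sequence are formal.
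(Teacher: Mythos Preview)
Your argument is correct and rests on the same input, \cref{tocho}(\ref{hom}), as the paper's, but packages it differently. The paper compares chain complexes directly and splits into cases: for $n>2$ the map $j_*:\pi_1(V)\to\pi_1(M)$ is an isomorphism, so the universal cover of $V$ literally sits inside $\widetilde M$ and the two twisted chain complexes agree through degree $n-1$; for $n=2$ one only knows $j_*$ is surjective, so the paper passes through the intermediate cover $V_{\ker j_*}$, builds the auxiliary complex $D_*$, and invokes \cite[Section 2.5]{dimca} to identify its homology with $\Ht{*}{V}$. Your long exact sequence of the pair $(M,V)$ handles both cases uniformly; the place where the $n=2$ subtlety is hiding in your write-up is the line ``pulling back $\vepsi$ and $\rho$ along $j_*$ is exactly what makes $V$'s twisted complex a subcomplex of $M$'s'' --- this identification of $C_*^{\vepsi,\rho}(V,\Ft)$ with the subcomplex of $C_*^{\vepsi,\rho}(M,\Ft)$ supported over $V$ holds precisely because $j_*$ is onto and the coefficient module is pulled back through $\pi_1(M)$, which is exactly the content the paper extracts from Dimca. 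Your route is cleaner in that it avoids the case split; the paper's route makes the $\pi_1$ issue (and the reason the hypothesis $n\geq 2$ matters) more visible.
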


\begin{proof}
We consider two cases: $n>2$ and $n=2$.

Suppose that $n>2$. By \cref{tocho}, part \ref{hom}, the space $M$ is obtained from $V$ by attaching cells of dimension $n\geq 3$, so $j_*$ is an isomorphism of fundamental groups. Hence, the chain complexes $C_*^{\vepsi,\rho}(V,\Ft)$ and $C_*^{\vepsi,\rho}(M,\Ft)$ are the same from place $n-1$ down, and $j$ induces an inclusion $C_n^{\vepsi,\rho}(V,\Ft)\hookrightarrow C_n^{\vepsi,\rho}(M,\Ft)$. The result follows from this observation.

Now, let us consider the case $n=2$. In this case, applying \cref{tocho}, part \ref{hom}, only tells us that $j_*$ is an epimorphism between the fundamental groups. We have that $\ker j_*\subset \ker \vepsi\circ j_*$ is a normal subgroup of $\pi_1(V)$. Let $V_{\ker j_*}$ be the covering space associated to $\ker j_*$, and note that $\pi_1(V)/\ker j_*\cong \pi_1(M)$.

We construct the chain complex
$$
D_*:=(\Ft\otimes_\F \V))\otimes_{\F[\pi_1(V)/\ker j_*]}C_*(V_{\ker j_*},\F).
$$
The inclusion $V\hookrightarrow M$ induces a map $V_{\ker j_*}\rightarrow \widetilde M$, where $\widetilde M$ is the universal cover of $M$. Since the space $M$ is obtained from $V$ by attaching cells of dimension $\geq 2$, this map induces isomorphisms
$$
D_i\xrightarrow{\cong} C_i^{\vepsi,\rho}(M,\Ft)=(\Ft\otimes_\F \V))\otimes_{\F[\pi_1(M)]}C_i(\widetilde M,\F)
$$
for $i=0,1$ and a monomorphism
$$
D_2\hookrightarrow C_2^{\vepsi,\rho}(M,\Ft).
$$

Thus, we have an isomorphism
$$
H_0(D_*)\xrightarrow{\cong}\Ht{0}{M}
$$
and an epimorphism
$$
H_1(D_*)\twoheadrightarrow\Ht{1}{M}
$$

By \cite[Section 2.5, p. 50]{dimca}, the homology of $D_*$ is the same as the homology of $C_*^{\vepsi,\rho}(V,\Ft)$. The result follows from this observation.
\end{proof}

\begin{rem}
Using the discussion following diagram (\ref{eqhomotopy}) in the proof of \cref{prophomotopy}, and repeating the same steps in the proof of \cref{dom}, we can conclude that the same results hold for the maps $\Ht{i}{V}\longrightarrow \Ht{i}{W^*}$ and $\Ht{i}{W^*}\longrightarrow \Ht{i}{M}$ induced by inclusion.
\label{remdom}
\end{rem}

The following corollary is a direct consequence of \cref{dom} and \cref{remdom}.

\begin{cor}
Let $n\geq 2$. For any $0\leq i \leq n-1$, if $\Ht{i}{V}$ is a torsion $\Ft$-module, then so are $\Ht{i}{M}$ and $\Ht{i}{W^*}$.
\label{cor}
\end{cor}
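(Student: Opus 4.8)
The plan is to deduce everything formally from \cref{dom} and \cref{remdom}, using only the elementary fact that a quotient of a torsion module over the principal ideal domain $\Ft$ is again torsion. Fix $n\geq 2$, fix $i$ with $0\leq i\leq n-1$, and assume $\Ht{i}{V}$ is a torsion $\Ft$-module.

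First I would treat $M$. By \cref{dom}, the inclusion $j\colon V\hookrightarrow M$ induces an $\Ft$-module map $\Ht{i}{V}\to\Ht{i}{M}$ which is an isomorphism when $i<n-1$ and an epimorphism when $i=n-1$; in either case it is surjective. Hence $\Ht{i}{M}$ is a quotient of $\Ht{i}{V}$: writing the map as $q$, we have $\Ht{i}{M}\cong \Ht{i}{V}/\ker q$. If $x\in\Ht{i}{V}$ is annihilated by a nonzero $p(t)\in\Ft$, then its image in the quotient is annihilated by the same $p(t)$; since every element of the torsion module $\Ht{i}{V}$ is so annihilated, every element of $\Ht{i}{M}$ is annihilated by some nonzero element of $\Ft$, i.e. $\Ht{i}{M}$ is torsion.

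Next I would treat $W^*$ identically. By \cref{remdom}, the inclusion $V\hookrightarrow W^*$ induces an $\Ft$-module map $\Ht{i}{V}\to\Ht{i}{W^*}$ satisfying the same conclusions as in \cref{dom} (isomorphism for $i<n-1$, epimorphism for $i=n-1$), so it is surjective for all $0\leq i\leq n-1$. The same quotient argument as above shows $\Ht{i}{W^*}$ is a torsion $\Ft$-module. This completes the proof, and there is no real obstacle: the content is entirely carried by \cref{dom} and \cref{remdom}, and the only algebraic input is the triviality that surjections preserve torsion.

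\begin{proof}
Fix $n\geq 2$ and $i$ with $0\leq i\leq n-1$, and suppose $\Ht{i}{V}$ is a torsion $\Ft$-module. By \cref{dom}, the map $\Ht{i}{V}\to\Ht{i}{M}$ induced by the inclusion $j\colon V\hookrightarrow M$ is an isomorphism for $i<n-1$ and an epimorphism for $i=n-1$; in particular it is surjective. Since $\Ft$ is a principal ideal domain and a quotient of a torsion $\Ft$-module is torsion, $\Ht{i}{M}$ is a torsion $\Ft$-module. By \cref{remdom}, the map $\Ht{i}{V}\to\Ht{i}{W^*}$ induced by inclusion is likewise an isomorphism for $i<n-1$ and an epimorphism for $i=n-1$, hence surjective, so the same argument shows $\Ht{i}{W^*}$ is a torsion $\Ft$-module.
\end{proof}
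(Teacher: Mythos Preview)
Your proof is correct and follows exactly the approach the paper intends: the paper states that the corollary is a direct consequence of \cref{dom} and \cref{remdom}, and you have simply spelled out the elementary observation that a surjective image of a torsion $\Ft$-module is torsion.
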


Now, we will show that the hypothesis of \cref{cor} is actually satisfied.
\begin{theorem}
Let $n\geq 2$. Then, $\Ht{i}{V}$ is a torsion $\Ft$-module for all $i$.
\label{uve}
\end{theorem}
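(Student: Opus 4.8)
The plan is to exploit the fibration $g_\vepsi|_V\colon V\to S^1$ supplied by \cref{tocho}, part \ref{fiber}, together with the fact that the epimorphism $\vepsi$ is precisely the homomorphism $\pi_1(V)\to\Z$ induced by this fibration. First I would reformulate the statement. Since $\Ht{i}{V}$ is a finitely generated module over the PID $\Ft$, it is torsion if and only if $\Ht{i}{V}\otimes_\Ft\F(t)=0$; and because $\F(t)$ is a localization of $\Ft$, it is flat over $\Ft$, so this is equivalent to the vanishing of $H_i\big(C^{\vepsi,\rho}_*(V,\F(t))\big)=H_i(V;\cL')$, where $\cL':=\cL_{\vepsi,\rho}\otimes_\Ft\F(t)$ is the local system of finite-dimensional $\F(t)$-vector spaces determined by $\vepsi\otimes\rho$. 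Thus the whole theorem reduces to showing that $\cL'$ has trivial homology on $V$.

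Next I would check that $\vepsi$ agrees with the map induced by $g_\vepsi|_V$. Since $g_\vepsi|_V=g_\vepsi\circ j$, it suffices to see that $(g_\vepsi)_*\colon\pi_1(M)\to\pi_1(S^1)=\Z$ sends each meridian $\gamma_j$ to $\vepsi_j$; but traversing $\gamma_j$ once increases $\arg\xi_j$ by $2\pi$ and leaves $\arg\xi_k$ unchanged for $k\neq j$, so it increases $\arg\big(\prod_k\xi_k^{\vepsi_k}\big)$ by $2\pi\vepsi_j$, and the $\gamma_j$ generate $\pi_1(M)$. Consequently $\pi_1(F)$ maps into $\ker\vepsi$, so $\cL'|_F$ carries no ``$t$-twisting'' along the fiber: it is $\rho_F\otimes_\F\F(t)$ with $\rho_F:=\rho|_{\pi_1(F)}$, and each $H_q(F;\rho_F)$ is a finite-dimensional $\F$-vector space because $F$ has the homotopy type of a finite CW complex.

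Then I would run the Wang exact sequence of $F\hookrightarrow V\xrightarrow{g_\vepsi|_V}S^1$ (the Leray--Serre spectral sequence over $S^1$) with coefficients in $\cL'$:
$$
\cdots\to H_q(F;\rho_F)\otimes_\F\F(t)\xrightarrow{\ \theta_q-\id\ }H_q(F;\rho_F)\otimes_\F\F(t)\to H_q(V;\cL')\to H_{q-1}(F;\rho_F)\otimes_\F\F(t)\to\cdots,
$$
where $\theta_q$ is the monodromy of the bundle on fiberwise homology, twisted along a loop $\gamma$ lying over a generator of $\pi_1(S^1)$. As $\vepsi$ is onto $\Z$ we may pick $\gamma$ with $\vepsi(\gamma)=1$, so parallel transport along $\gamma$ multiplies the $\F(t)$-coefficient by $t$; hence $\theta_q=t\,\psi_q$ for an $\F$-linear automorphism $\psi_q$ of $H_q(F;\rho_F)$. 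Choosing an $\F$-basis, the matrix of $\theta_q-\id$ is $tP_q-I$ with $P_q\in\GL(H_q(F;\rho_F))$, and $\det(tP_q-I)$ is a polynomial in $t$ whose value at $t=0$ is $(-1)^{\dim H_q(F;\rho_F)}\neq0$, hence a nonzero element of $\F(t)$. Therefore $\theta_q-\id$ is invertible over $\F(t)$ for every $q$, and the Wang sequence forces $H_q(V;\cL')=0$ for all $q$, which by the reduction above proves the theorem.

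The only real point requiring care is the identification of $\vepsi$ with the fibration-induced class, which is what keeps the relevant infinite cyclic cover of $V$ equal to the bundle pullback of $\R\to S^1$ and hence keeps the fiber homology finite-dimensional over $\F$; everything afterward is formal. Equivalently, one can argue at chain level using a mapping-torus CW structure on $V$: it yields a short exact sequence of complexes of $\Ft$-modules $0\to C_*(F;\rho_F)\otimes_\F\Ft\xrightarrow{\id-t\widetilde h}C_*(F;\rho_F)\otimes_\F\Ft\to C^{\vepsi,\rho}_*(V,\Ft)\to 0$, and after applying $-\otimes_\Ft\F(t)$ the left-hand map becomes an isomorphism in each degree (its determinant is $\det(\id-t\widetilde h_i)\in\F[t]$, with constant term $1$), hence a quasi-isomorphism, again giving $\Ht{i}{V}\otimes_\Ft\F(t)=0$ for all $i$.
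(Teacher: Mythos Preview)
Your proof is correct. The key insight---that $\vepsi$ coincides with the homomorphism induced by the fibration $g_\vepsi|_V\colon V\to S^1$, so that the fiber carries no $t$-twisting---is exactly the one the paper uses, but the two arguments package it differently.

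The paper bypasses the Wang sequence entirely. It observes that the infinite cyclic cover $V^\vepsi\to V$ associated to $\ker\vepsi$ is the pullback of $\R\to S^1$ along $g_\vepsi|_V$, hence $V^\vepsi\cong F\times\R\simeq F$. Then it invokes \cite[Theorem 2.1]{kirk} to identify $\Ht{i}{V}\cong H_i(V^\vepsi,\cL_\rho)$ as $\Ft$-modules, and since $F$ is a finite CW complex these are finite-dimensional $\F$-vector spaces, hence automatically $\Ft$-torsion. This is shorter and avoids any determinant computation.

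Your Wang-sequence route is a genuine alternative: you pass to $\F(t)$-coefficients first and then show directly that the monodromy operator $t\psi_q-\id$ is invertible because $\det(tP_q-I)$ has nonzero constant term. This is slightly more hands-on, but it has the advantage of being more informative: the characteristic polynomials $\det(tP_q-I)$ are essentially the twisted Alexander polynomials $\Delta_q^{\vepsi,\rho}(V)$ themselves, so your argument is one step away from an explicit formula rather than a bare torsion statement. Your closing chain-level mapping-torus remark is in fact very close in spirit to the paper's covering-space argument---both amount to the observation that $C^{\vepsi,\rho}_*(V,\Ft)$ is the cone on an injective self-map of a complex of finitely generated free $\Ft$-modules with finite $\F$-rank homology.
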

\begin{proof}
Note that $(g_\vepsi)_*=\vepsi$. Let $V^{\vepsi}\xrightarrow{p_1} V$ be the covering space induced by $\ker{\vepsi}$. Recall that by \cref{tocho}, part \ref{fiber}, the map $(g_{\vepsi})_{| V}:V\rightarrow S^1$ is a fiber bundle. We call the fiber $F$.

The covering space $V^{\vepsi}\xrightarrow{p_1} V$ is the pullback by $(g_{\vepsi})_{| V}$ of the universal cover $\R\xrightarrow{p_2} S^1$, and we have the following commutative diagram of the pullback
\begin{center}
\begin{tikzcd}
V^{\vepsi}\arrow[d, "p_1"]\arrow[r] & \R\arrow[d,"p_2"]\\
V
\arrow[r,"g_{\vepsi}"]
& S^1
\end{tikzcd}
\end{center}

Note that $V^{\vepsi}\rightarrow \R$ is a fiber bundle over a contractible space with fiber $F$, so $V^{\vepsi}$ is homeomorphic to $F\times \R$, and therefore homotopically equivalent to $F$.

Let $\mathcal{L}_\rho$ be the local system of $\F$-vector spaces given by the representation of $\pi_1(V^\vepsi)$ induced by $\rho$. By \cite[Theorem 2.1]{kirk}, we have that
$$
\Ht{i}{V}\cong H_i(V^\vepsi,\mathcal{L}_\rho)
$$
as $\Ft$-modules for all $i$. Since $V^\vepsi$ is homotopy equivalent to $F$, which by \cref{tocho}, part \ref{fiber}, has the homotopy type of a finite CW-complex, we have that the $H_i(V^\vepsi,\mathcal{L}_\rho)$ are finite dimensional $\F$-vector spaces for all $i$, and thus $\Ht{i}{V}$ are torsion $\Ft$-modules for all $i$.
\end{proof}

Let us recall the following fact, which can be found in \cite{kirk}.

\begin{prop}
Let $X$ be a finite CW-complex. If $\vepsi$ is non-trivial, then $$\Ht{0}{X}$$
is a torsion $\Ft$-module.
\label{pzero}
\end{prop}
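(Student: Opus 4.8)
The plan is to compute $\Ht{0}{X}$ directly from the chain-complex definition. By definition, $\Ht{0}{X}$ is the cokernel of the boundary map $\partial_1^{\vepsi,\rho}: C_1^{\vepsi,\rho}(X,\Ft) \to C_0^{\vepsi,\rho}(X,\Ft)$, where $C_i^{\vepsi,\rho}(X,\Ft) = (\Ft\otimes_\F\V)\otimes_{\F[\pi_1(X)]} C_i(\widetilde X,\F)$. Since $X$ is path-connected, I may choose a CW structure with a single $0$-cell, so that $C_0^{\vepsi,\rho}(X,\Ft)\cong \Ft\otimes_\F\V$ as an $\Ft$-module. First I would identify $\Ht{0}{X}$ with the module of coinvariants $(\Ft\otimes_\F\V)_{\pi_1(X)}$, i.e.\ the quotient of $\Ft\otimes_\F\V$ by the $\Ft$-submodule generated by all elements $(p(t)\otimes v) - (p(t)\otimes v)\cdot\alpha = (p(t)\otimes v) - (p(t)t^{\vepsi(\alpha)}\otimes v\rho(\alpha))$ for $\alpha\in\pi_1(X)$. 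This is the standard identification of $H_0$ with local system coefficients, and it follows because the image of $\partial_1$ is exactly the submodule of "differences" coming from the lift of loops based at the $0$-cell.

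Next I would exploit the hypothesis that $\vepsi$ is non-trivial: pick $\alpha_0\in\pi_1(X)$ with $N := \vepsi(\alpha_0)\neq 0$; replacing $\alpha_0$ by its inverse we may assume $N>0$. Working with the generator $\alpha_0$ alone already forces torsion. Indeed, in $\Ht{0}{X}$ we have the relation $p(t)\otimes v \equiv p(t)t^{N}\otimes v\rho(\alpha_0)$ for every $v\in\V$, $p(t)\in\Ft$. Writing $A := \rho(\alpha_0)\in\GL(\V)$, this says that on $\Ht{0}{X}$ the $\Ft$-linear map induced by "multiply by $t^N$ and act by $A$" is the identity; equivalently, the operator $t^N A - \Id$ (acting $\F$-linearly on $\Ft\otimes_\F\V$, extended $\Ft$-linearly... more precisely the operator sending $p(t)\otimes v\mapsto p(t)t^N\otimes vA - p(t)\otimes v$) kills $\Ht{0}{X}$. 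So $\Ht{0}{X}$ is a quotient of $(\Ft\otimes_\F\V)/(t^N A - \Id)(\Ft\otimes_\F\V)$.

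It then remains to check that this last module is a torsion $\Ft$-module, which is a finite-dimensional linear-algebra computation: choosing a basis of $\V$ identifies $\Ft\otimes_\F\V$ with $\Ft^{\dim\V}$, and the operator $t^NA - \Id$ is represented by the matrix $t^N A - I$ with entries in $\Ft$, whose determinant is $\det(t^N A - I)\in\Ft$. Since $A$ is invertible, $\det(t^N A - I)$ is a nonzero element of $\Ft$ (its lowest-degree term is $\pm\det(-I) = \pm 1$, or one sees $t^{-N\dim\V}\det(t^NA-I) = \det(A)\det(t^{-N}I - A^{-1})$ is a nonzero polynomial in $t^{-N}$), hence the cokernel of $t^NA-I$ is a torsion $\Ft$-module annihilated by $\det(t^NA - I)$. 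Therefore its quotient $\Ht{0}{X}$ is torsion as well, annihilated by $\det(t^{\vepsi(\alpha_0)}\rho(\alpha_0) - \Id)$. I expect the only mildly delicate point to be the bookkeeping in the first step — carefully justifying the identification of $\coker\partial_1$ with the coinvariants and reducing to a single relation — while the torsion conclusion itself is immediate once $\det(t^NA - I)\neq 0$ is observed; this nonvanishing is where invertibility of $\rho(\alpha_0)$ is used.
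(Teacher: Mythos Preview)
Your proposal is correct. The paper does not actually give a proof of this proposition; it merely states it as a known fact and cites Kirk--Livingston \cite{kirk}. Your direct computation of $\Ht{0}{X}$ as the $\pi_1(X)$-coinvariants of $\Ft\otimes_\F\V$, followed by the observation that any single $\alpha_0$ with $\vepsi(\alpha_0)\neq 0$ already yields the annihilating polynomial $\det(t^{\vepsi(\alpha_0)}\rho(\alpha_0)-\Id)\neq 0$, is exactly the standard argument underlying that reference, and it is in fact the same computation the paper itself carries out later when describing $\Delta_0^{\vepsi,\rho}(M)$ via the column matrix of blocks $t^{\vepsi(a_i)}\rho(a_i)-\Id$ (cf.\ \cref{zero}). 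The bookkeeping you flag in the first step is routine once one fixes a CW structure with a single $0$-cell, since the $1$-cells then correspond to a generating set of $\pi_1(X)$ and their boundaries in $C_0(\widetilde X,\F)$ are precisely the elements $(\gamma-1)\cdot\widetilde e_0$.
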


Now, we are ready to prove the main result in this section.
\torsion
\begin{proof}
The space $M$ is an affine variety of complex dimension $n$, so it is homotopy equivalent to a finite CW complex of real dimension $n$ (\cite{dimca2,milmor}). Thus $\Ht{i}{M}=0$ for $i>n$. This also implies that $\Ht{n}{M}$ is a free module, since it is the kernel of a morphism of free $\Ft$-modules.

Now, let us prove that the twisted Alexander modules $\Ht{i}{M}$ are torsion $\Ft$-modules for every $0\leq i\leq n-1$. If $n=1$, this is true by \cref{pzero}. Suppose that $n\geq 2$. In that case, by \cref{cor}, we just need to show that $\Ht{i}{V}$ is a torsion $\Ft$-module for every $0\leq i\leq n-1$, which is true by \cref{uve}.

Finally, let us compute the rank of $\Ht{n}{M}$. We abuse notation and call $\mathcal{L}_{\vepsi,\rho}$ the local system of vector spaces over the field of rational functions $\F(t)$ defined by the tensor representation induced by $\vepsi$ and  $\rho$ (instead of the local system of $\Ft$-modules induced by $\vepsi$ and $\rho$). By \cite[Proposition 2.5.4]{dimca}, we have that
$$
(-1)^n\rank_{\Ft}\Ht{n}{M}=\chi(M,\mathcal{L}_{\vepsi,\rho})=\rank_{\Ft}(\Ft\otimes_\F \V)\cdot\chi(M)=\dim_\F(\V)\cdot\chi(M).
$$
Hence,
$$
\rank_{\Ft}\Ht{n}{M}=(-1)^n\cdot\dim_\F(\V)\cdot\chi(M).
$$
\end{proof}

\begin{rem}
The space $V$ depends on the epimorphism $\vepsi$, but $W^*$ does not. This dependence on $\vepsi$ came in handy in the proof of \cref{torsion}, although it can be proved that the Alexander modules $\Ht{i}{W^*}$ are torsion for all $i\geq 0$ directly, as we will see in \cref{ssgeneralcase}.
\end{rem}

We end this section with the result that we will use in \cref{sroots}, which is a consequence of everything we have discussed in this section.

\corroots

\section{Roots of twisted Alexander polynomials}\label{sroots}

%In this section, we will compute the twisted Alexander polynomials of $W^*$ (or of $\partial W$ in the line arrangement case, see Remark \ref{rmkboundary}). By Corollary \ref{corroots}, this will give us a set in which the roots of the twisted Alexander polynomials of the arrangement complement $M$ lie.

\subsection{Line arrangement case ($n=2$)}
\label{sslines}
Let $\mathcal{A}=\{H_1,\ldots,H_m\}\subset \C^2$ be an essential line arrangement. Note that, in the line arrangement case, the only two twisted Alexander polynomials that we will be considering are the $0$-th and the first ones.

The $0$-th case is always easy to compute, not just in dimension $2$. The $0$-th and first twisted Alexander polynomials of any finite CW complex can be computed from a presentation of the fundamental group using Fox Calculus (\cite[Section 4]{kirk}). In particular, if $M$ is the complement of a complex hyperplane arrangement $\{H_1,\ldots,H_m\}$, after a Lefschetz type argument we can in principle use a presentation of $\pi_1(M)$ to compute the $0$-th and first twisted Alexander polynomials of $M$. Let us consider the map of $\Ft$-modules
$$
\partial:(\Ft\otimes \V)^m\rightarrow \Ft\otimes \V
$$
given by the column matrix with entries $$t^{\vepsi(a_i)}\rho(a_i)-\id\in \cM_{\dim_\F\V\times \dim_\F\V}(\Ft),\text{ \ \ \ }i=1,\ldots,m$$
where $a_1,\ldots,a_m$ are the generators of $\pi_1(M)$ as described in \cref{ssdef}. The $0$-th twisted Alexander polynomial $\Delta_0^{\vepsi,\rho}(M)$ is just a generator of the Fitting ideal of the cokernel of $\partial$, so it is the greatest common divisor of the minors of size $\dim_\F\V$ of the column matrix we just described (see \cite[Section 4]{kirk}). Hence, we have the following result.

\begin{prop}
$\Delta_0^{\vepsi,\rho}(M)$ is the greatest common divisor of the minors of size $\dim_\F\V$ of the column matrix with entries $$t^{\vepsi(a_i)}\rho(a_i)-\id\in \cM_{\dim_\F\V\times \dim_\F\V}(\Ft)$$
for $i=1,\ldots, m$. In particular, using \cref{notndet},
$$
\Delta_0^{\vepsi,\rho}(M)\text{ divides }\gcd_{i=1,\ldots,m}\left\{\deti(a_i)\right\}
$$
\label{zero}
\end{prop}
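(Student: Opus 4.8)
The plan is to identify $\Ht{0}{M}$ with the cokernel of the map $\partial$ and then apply the standard description, over the principal ideal domain $\Ft$, of the first non-zero Fitting ideal of a cokernel in terms of minors.

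First I would recall that $\Ht{0}{M}$ depends only on $\pi_1(M)$ together with the representation $\vepsi\otimes\rho$: by \cref{defntwistmod} it is $H_0$ of $C^{\vepsi,\rho}_*(M,\Ft)$, which in degree zero is a quotient of $C_0$, so equivalently it is the module of coinvariants of $\Ft\otimes_\F\V$ under $\pi_1(M)$. Since $\pi_1(M)$ is generated by the meridians $a_1,\ldots,a_m$ (via \cref{presentation}; in higher dimensions after a Lefschetz-type reduction), the submodule of relations is generated by the elements $v-v\cdot(t^{\vepsi(a_i)}\rho(a_i))$ for $i=1,\ldots,m$, where $v$ runs over $\Ft\otimes_\F\V$ viewed as a row vector; this is precisely the image of $\partial$, so $\Ht{0}{M}=\coker(\partial)$. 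Concretely this is the Fox-calculus computation of \cite[Section 4]{kirk}: using the CW structure underlying \cref{presentation}, with one $0$-cell and one $1$-cell per generator $a_i$, the boundary of the $i$-th lifted $1$-cell is $a_i-1$ in the group ring, and the right action of $\alpha$ on $\Ft\otimes_\F\V$ is right multiplication by $t^{\vepsi(\alpha)}\rho(\alpha)$.

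Next I would observe that, $\vepsi$ being a positive epimorphism and hence non-trivial, \cref{pzero} shows $\Ht{0}{M}=\coker(\partial)$ is a torsion $\Ft$-module. Over the PID $\Ft$ a finitely generated torsion module has non-zero $0$-th Fitting ideal, so the first non-zero Fitting ideal of $\Ht{0}{M}$ is $\operatorname{Fitt}_0(\coker\partial)$, and its generator is $\Delta_0^{\vepsi,\rho}(M)$. I would then invoke the standard fact that $\operatorname{Fitt}_0(\coker A)$ is the ideal generated by the maximal minors of $A$ when $A$ has at least as many columns as rows. Written as an honest matrix over $\Ft$ rather than in block form, $\partial$ has $\dim_\F(\V)$ rows and $m\cdot\dim_\F(\V)$ columns, so $\operatorname{Fitt}_0(\coker\partial)$ is generated by the $\dim_\F(\V)\times\dim_\F(\V)$ minors of $\partial$; being an ideal of a PID it is principal, generated by their gcd. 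This is the first assertion.

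The ``in particular'' clause then follows at once: for each $i=1,\ldots,m$, choosing the $\dim_\F(\V)$ columns of $\partial$ that form the $i$-th block yields the maximal minor $\det(t^{\vepsi(a_i)}\rho(a_i)-\id)$, so the gcd of all maximal minors divides the gcd of this sub-collection, i.e. $\Delta_0^{\vepsi,\rho}(M)$ divides $\gcd_{i=1,\ldots,m}\{\det(t^{\vepsi(a_i)}\rho(a_i)-\id)\}$. The step requiring the most care is the first one --- matching the sign and the row-versus-column conventions so that $\coker(\partial)$ is exactly $\Ht{0}{M}$, and using the Lefschetz hyperplane theorem together with \cref{presentation} to know that the meridians $a_i$ really do generate $\pi_1(M)$; everything afterwards is routine linear algebra over $\Ft$.
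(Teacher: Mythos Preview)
Your proposal is correct and follows essentially the same approach as the paper: the paper's argument (given in the paragraph immediately preceding the proposition) identifies $\Ht{0}{M}$ with $\coker(\partial)$ via Fox calculus and \cite[Section 4]{kirk}, and then reads off $\Delta_0^{\vepsi,\rho}(M)$ as the gcd of the maximal minors. You have simply made the steps more explicit, in particular the use of \cref{pzero} to guarantee that the relevant Fitting ideal is indeed $\operatorname{Fitt}_0$.
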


Now, let us study the first twisted Alexander polynomials of $M$. We have the following result.

\alexlines

\begin{proof}
We will use techniques coming from \cite[Theorem 5.6]{cogo}, although, in our case, the work done in Sections \ref{shomotopytype} and \ref{storsion} allows us to not have to deal with contributions coming from the line at infinity, unlike in \cite{cogo}. Let $F=H\backslash \bigsqcup\limits_{k=1}^s (H\cap \B_k^4)$ be the surface obtained by removing small balls $\B_k^4$ around the singular points $P_k$. Note that what we are really removing from our surface is a $2$-dimensional open disk $D_i^k$ from every line $H_i$ in $\cA$ containing $P_k$.

Let $N=F\times S^1$. $N$ should be thought of as the boundary of a tubular neighborhood around the non-singular part of $H$. We have that $\partial N=\partial F\times S^1$, and since $\partial F$ is a union of disjoint $S^1$'s (one from every disk $D_i^k$ removed), then $\partial N$ is a union of disjoint tori $\bigsqcup\limits_{k,i} T_i^k$ (again, one from every disk $D_i^k$ removed). Let us fix a point $f_k^i$ in the $S^1$ corresponding to the boundary of the disk $D_i^k$ for every such disk removed.

Let $L_k$ be the link of the singularity at the point $P_k$ (which is a Hopf link with $d_k$ components), and let $S^3_k$ be the boundary of $\B_k^4$. We consider the space
$$
X=N\cup_{\left(\bigsqcup\limits_{k,i} T_i^k\right)} \left(\bigsqcup_{k=1}^s S^3_k\backslash L_k\right)\subset M
$$
where the gluing is done as follows. A meridian around the $i$-th component of $L_k$ (the one corresponding to the line $H_i$, which we will denote by $L_k^i$) is glued to $\{f_k^i\}\times S^1\subset N$, and $L_k^i$ is glued to the $S^1$ corresponding to the boundary of $D_i^k$.

By the definition of the stratified tubular neighborhood $W$, we have that $X$ is homotopy equivalent to $\partial W$. By \cref{corroots}, the first twisted Alexander polynomial of the line arrangement complement $M$ divides the first twisted Alexander polynomial of $\partial W$ (which is homotopy equivalent to $W^*$, see \cref{rmkboundary}), so our goal now is to compute $\Delta_1^{\vepsi,\rho}(X)$.

Notice that $N$ has $m$ connected components, one for every line $H_i$ in our arrangement. That is, if we define $F_i=F\cap H_i$, then $N=\bigsqcup\limits_{i=1}^m F_i\times S^1$. Notice that $F_i$ is just a complex line $H_i$ (or a real plane) with $s_i$ disks removed, one for every singular point of $H$ in $H_i$. Thus, $F_i$ is homotopy equivalent to a wedge sum of $s_i$ circles, and hence $F_i\times S^1$ (and $N$) is homotopy equivalent to a $2$-dimensional CW-complex.

It is also well-known (\cite[Lemma 2]{lib2}) that $S^3_k\backslash L_k$ has the homotopy type of a $2$-dimensional CW-complex as well. The space $X$ also has the homotopy type of a $2$-dimensional CW-complex by how it is constructed.

We have the following Mayer-Vietoris short exact sequence of complexes with coefficients in $\F(t)$.
$$
0\rightarrow\bigoplus_{k,i}C_*^{\vepsi,\rho}(T_i^k,\F(t))\rightarrow\left(\bigoplus_k C_*^{\vepsi,\rho}(S^3_k\backslash L_k,\F(t))\right)\oplus C_*^{\vepsi,\rho}(N,\F(t))\rightarrow C_*^{\vepsi,\rho}(X,\F(t))\rightarrow 0
$$

Let $\cH$ be the Mayer-Vietoris long exact sequence of the twisted homology groups (seen as a complex). We will consider the twisted Reidemeister torsion $\tau_{\vepsi,\rho}$ (as defined in \cref{defntwisttor}) of all the pieces involved in this short exact sequence, namely $N$, $\bigsqcup\limits_{k=1}^s S^3_k\backslash L_k$, and their intersection $\bigsqcup\limits_{k,i} T_i^k$.

As pointed out in  \cref{lemwelldef}, the twisted Reidemeister torsion for acyclic complexes is independent of the choice of bases up to multiplication by a unit in $\Ft$, and, as we will see in the proof of \cref{H}, we only consider the twisted Reidemeister torsion of acyclic complexes in this proof. Since all of those pieces (including $X$) have the homotopy type of a $2$-dimensional CW-complex, then the only non-trivial Alexander polynomials are the $0$-th and the first ones for all of those spaces, and by \cref{lemtorpol}, we have that
$$
\tau_{\vepsi,\rho}(\cdot)=\frac{\Delta_1^{\vepsi,\rho}(\cdot)}{\Delta_0^{\vepsi,\rho}(\cdot)}
$$
for all of the relevant spaces in this problem ($X$, $N$, $\bigsqcup\limits_{k,i} T_i^k$ and $\bigsqcup\limits_{k=1}^s S^3_k\backslash L_k$).

By \cref{lemtorsion}, we have that
\begin{equation}
\left(\prod_{k=1}^s\tau_{\vepsi,\rho}(S^3_k\backslash L_k)\right)\left(\tau_{\vepsi,\rho}(N)\right)=\left(\prod_{k,i}\tau_{\vepsi,\rho}(T_i^k)\right)\tau_{\vepsi,\rho}(X)\tau(\cH)
\label{tor}
\end{equation}
were $\tau(\cH)$ is the torsion of a complex. 

Now, we use the following result.

\begin{prop}
$\cH$ is the trivial complex. In particular, $\tau(\cH)=1$
\label{H}
\end{prop}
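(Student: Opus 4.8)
The claim is that the Mayer–Vietoris long exact sequence $\cH$ in twisted homology associated to the decomposition $X = N \cup \left(\bigsqcup_k S^3_k \setminus L_k\right)$ along $\bigsqcup_{k,i} T_i^k$ is the zero complex. Since $\cH$ is exact by construction, "$\cH$ is the $0$ complex" is equivalent to the statement that every twisted homology group appearing in it — i.e. $\Ht{*}{N}$, $\Ht{*}{S^3_k \setminus L_k}$, $\Ht{*}{T_i^k}$, and $\Ht{*}{X}$ — vanishes after tensoring with $\F(t)$. So the plan is to show that each of these four families of spaces has vanishing twisted Alexander modules over $\F(t)$; equivalently (by \cref{lemtorpol} and the remark preceding it), that all the relevant twisted Alexander modules $\Ht{i}{\cdot}$ computed over $\Ft$ are torsion, so that after tensoring with $\F(t)$ they die.

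**The key steps.**

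First I would handle the tori $T_i^k = S^1 \times S^1$. The meridian $f_k^i$-circle of each torus maps to the meridian around $L_k^i$, hence to a meridian $a_r$ around the corresponding hyperplane $H_r$ in $M$; since $\vepsi$ is a positive epimorphism, $\vepsi(a_r) > 0$, so $\vepsi$ restricted to $\pi_1(T_i^k) = \Z^2$ is non-trivial. A non-trivially-$\vepsi$-graded $\Z^2$ has all of its twisted homology torsion over $\Ft$ — this is a direct computation with the standard CW structure on the torus, or one can cite \cref{pzero} for $H_0$ and observe that the Euler characteristic of the torus is $0$ to pin down the rank of $H_1$ (and $H_2$ is the kernel of a map of free modules whose rank must then be $0$). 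Second, for $S^3_k \setminus L_k$: this is the complement of a Hopf link with $d_k$ components, which fibers over $S^1$ (the Milnor fibration of $z_1 \cdots z_{d_k}$, or directly: it is homotopy equivalent to a $(d_k{-}1)$-torus times... — more cleanly, $S^3_k \setminus L_k$ deformation retracts to a space with finite-dimensional $\F$-homology once we pass to the $\vepsi$-cover, by the same fibration-over-$S^1$ argument used in the proof of \cref{uve}), and the composite $\pi_1(S^3_k \setminus L_k) \to \pi_1(M) \xrightarrow{\vepsi} \Z$ sends each meridian $\gamma_j$ to $\vepsi_j > 0$, hence is non-trivial; therefore $\Ht{*}{S^3_k \setminus L_k}$ is torsion. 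Third, $N = \bigsqcup_i F_i \times S^1$: each $F_i$ is a plane with $s_i \geq 1$ disks removed (so homotopy equivalent to a wedge of circles), and on the $S^1$ factor $\vepsi$ again restricts non-trivially (that $S^1$ is a meridian circle); running the Künneth-type / fibration argument, $\Ht{*}{F_i \times S^1}$ is torsion over $\Ft$ — indeed $F_i \times S^1$ fibers over $S^1$ with finite-type fiber $F_i$. Finally, $\Ht{*}{X}$: since $X \simeq \partial W \simeq W^*$, \cref{corroots} gives that $\Ht{0}{X}$ and (for $n=2$) $\Ht{1}{X} = \Ht{n-1}{W^*}$ are torsion; and $X$ has the homotopy type of a $2$-complex so $\Ht{i}{X} = 0$ for $i \geq 2$ after tensoring — wait, $\Ht{2}{X}$ need not be zero a priori, but it is the kernel of a map of free $\Ft$-modules hence free, and a rank computation via Euler characteristic together with the torsion of $\Ht{0}, \Ht{1}$ forces its rank to be $\chi(X) \cdot \dim_\F \V$; I would instead argue that $X \simeq W^*$ and $W^*$ is an open subset of the affine variety $M$ of the homotopy type of a $1$-complex... — safest is to note $W^* \simeq \partial W$ is a compact $3$-manifold with boundary, hence homotopy equivalent to a $2$-complex, and then invoke \cref{corroots} plus a rank count to see $\Ht{2}{X}$ is free of rank $\dim_\F(\V)\cdot\chi(X)$; but $\chi(X) = \chi(W^*) = \chi(\partial W)$ and $\partial W$ is a closed odd-dimensional manifold so $\chi(\partial W) = 0$, giving $\Ht{2}{X} = 0$. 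Hence every group in $\cH$ vanishes over $\F(t)$, so $\cH$ is the zero complex and $\tau(\cH) = 1$.

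**The main obstacle.**

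The routine parts are the torus and the wedge-of-circles-times-$S^1$ computations; the one step needing genuine care is pinning down $\Ht{2}{X}$ (equivalently $\Ht{2}{W^*}$) and arguing it vanishes over $\F(t)$ — the cited results (\cref{corroots}, \cref{torsion}) directly address $M$ and the modules in degrees $\leq n-1$, but not degree $n = 2$ for $W^*$. I expect to resolve this using that $W^* \simeq \partial W$ is homotopy equivalent to a closed orientable $3$-manifold's worth of data — more precisely $\partial W$ is a closed $3$-manifold, hence $\chi(\partial W) = 0$, and $\Ht{2}{X}$, being the top homology of a $2$-complex model, is free with rank determined by the Euler characteristic to be $\dim_\F(\V)\cdot\chi(X) = 0$. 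Alternatively, and perhaps more in the spirit of the paper, one applies the same $\vepsi$-cover argument as in \cref{uve}: $W^*$ (like $V$) retracts onto something fibering over $S^1$ so that its $\vepsi$-cover has finite-dimensional $\F$-homology in all degrees, forcing all $\Ht{i}{W^*}$ to be torsion over $\Ft$ and hence zero over $\F(t)$; this is cleaner and I would use it. With all four families of groups killed, exactness of $\cH$ makes it the zero complex and $\tau(\cH) = 1$ follows immediately from the definition of torsion of an acyclic based complex.
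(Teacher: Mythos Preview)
Your overall strategy—showing that each space appearing in the Mayer–Vietoris sequence has vanishing twisted homology over $\F(t)$—is sound, and your fibration-style arguments for the tori, for $N$, and for the link complements are correct in spirit (though for $F_i\times S^1$ it is cleaner to fiber over $F_i$ with fiber the meridian $S^1$, whose twisted homology vanishes over $\F(t)$ since $t^{\vepsi_i}\rho(a_i)-\id$ is invertible there, rather than over $S^1$ as you suggest). The paper instead carries out an explicit Fox-calculus computation for $N$; this yields the same acyclicity conclusion but also produces the boundary matrices used immediately afterwards to compute $\tau_{\vepsi,\rho}(N)$.

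The substantive difference is in how $X$ is handled. You flag acyclicity of $X$ as ``the main obstacle'' and attack it directly, but your argument has a gap: $\partial W$ is \emph{not} a closed $3$-manifold—the lines $H_i\subset\C^2$ are unbounded, so $W$ and $\partial W$ are non-compact—and the odd-dimension vanishing of $\chi$ does not apply as stated. (The conclusion $\chi(X)=0$ is nonetheless true, since each of $N$, $S^3_k\setminus L_k$, $T_i^k$ has Euler characteristic zero and inclusion–exclusion gives $\chi(X)=0$; with that fix your rank argument for $\Ht{2}{X}$ goes through.) Your alternative, that $W^*$ ``retracts onto something fibering over $S^1$'', is not justified either: this is known for $V_\delta$, not for $W^*$, and for $n=2$ the comparison in \cref{prophomotopy} only yields a $\pi_1$-surjection, not a homotopy equivalence. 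The paper sidesteps this obstacle with a one-line observation you overlooked: since $\cH$ is a long exact sequence, acyclicity of any three of the four complexes forces acyclicity of the fourth. It verifies acyclicity for $T_i^k$, $N$, and $S^3_k\setminus L_k$, and then gets $X$ for free—no separate argument for $X$ is needed at all.
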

\begin{proof}
We need to show that the complexes $C_*^{\vepsi,\rho}(T_i^k,\F(t))$ (for every $k$ and $i$), $C_*^{\vepsi,\rho}(S^3_k\backslash L_k,\F(t))$ (for every $k$), $C_*^{\vepsi,\rho}(N,\F(t))$ and $C_*^{\vepsi,\rho}(X,\F(t))$ are acyclic. By the long exact sequence in homology, it suffices to show that three out of those four are acyclic. 

By \cite[Proposition 2.9]{maxtommy}, since  $\F(t)$ is flat over $\Ft$ and $\vepsi(\beta_k)\neq 0$ (in fact, $\vepsi(\beta_k)> 0$ by \cref{remHopf}), we have that $$C_*^{\vepsi,\rho}(S^3_k\backslash L_k,\F(t))$$ is acyclic.

Let us now see that $C_*^{\vepsi,\rho}(N,\F(t))$ is acyclic, or equivalently, that $H_j^{\vepsi,\rho}(F_i\times S^1,\F(t))=0$ for all $i=1,\ldots,m$ and $j\geq 0$. We can compute $H_0^{\vepsi,\rho}(F_i\times S^1,\F(t))$ and $H_1^{\vepsi,\rho}(F_i\times S^1,\F(t))$ directly using Fox Calculus (\cite[Section 4]{kirk}), a technique that only requires a presentation of the fundamental group. Recall that $F_i$ is homotopy equivalent to a wedge sum of $s_i$ circles, and let $b_1^i,\ldots,b_{s_i}^i$ be loops around the respective circles. With this notation, we see that
\begin{equation}
\pi_1(F_i\times S^1)=\langle b_1^i,\ldots,b_{s_i}^i,a_i \mid [b_j^i,a_i], j=1,\ldots,s_i\rangle.
\label{eqnFi}
\end{equation}

In this presentation we are abusing notation, since the base point of $a_i$ is not in $F_i\times S^1$. By $a_i$ in this presentation, we mean a loop contained in $F_i\times S^1$ that is isotopic to $a_i$ in $M$ after a change of base points.

Using that $\vepsi(a_i)$ is not $0$ for any $i=1,\ldots,m$ in a routine Fox Calculus computation using this presentation, we get that $H_j^{\vepsi,\rho}(F_i\times S^1,\F(t))=0$ for $j=0,1$.

To finish proving that $H_j^{\vepsi,\rho}(F_i\times S^1,\F(t))=0$ for all $j$, we just have to show it for $j=2$, since $F_i\times S^1$ is homotopy equivalent to a $2$-dimensional CW-complex. This $2$-dimensional CW-complex is the cartesian product of a wedge sum of $s_i$ $S^1$'s and an $S^1$. Thus, it has one $0$-cell, ($s_i+1$) $1$-cells, and $s_i$ $2$-cells. Hence, an Euler characteristic argument tells us that $H_2^{\vepsi,\rho}(F_i\times S^1,\F(t))=0$, concluding our proof of the acyclicity of $C_*^{\vepsi,\rho}(N,\F(t))$.

The only thing left to prove here is that $C_*^{\vepsi,\rho}(T_i^k,\F(t))$ is acyclic (for every $k$ and $i$). This is just a computation that follows the same steps as what we did for $C_*^{\vepsi,\rho}(F_i\times S^1,\F(t))$, so we will omit it. It also relies on the fact that $\vepsi(\gamma_i)\neq 0$, for every meridian $\gamma_i$ around $H_i$ and for all $i=1,\ldots, m$.
\end{proof}

Now, using this result, equation (\ref{tor}) becomes
\begin{equation}
\left(\prod_{k=1}^s\tau_{\vepsi,\rho}(S^3_k\backslash L_k)\right)\left(\tau_{\vepsi,\rho}(N)\right)=\left(\prod_{k,i}\tau_{\vepsi,\rho}(T_i^k)\right)\tau_{\vepsi,\rho}(X).
\label{tor2}
\end{equation}

We want to compute $\Delta_1^{\vepsi,\rho}(X)$. By \cref{dom}, we have that $\Delta_0^{\vepsi,\rho}(X)=\Delta_0^{\vepsi,\rho}(M)$, and we know $\Delta_0^{\vepsi,\rho}(M)$ by \cref{zero}. Hence, to compute $\Delta_1^{\vepsi,\rho}(X)$, it suffices to compute $\tau_{\vepsi,\rho}(X)=\frac{\Delta_1^{\vepsi,\rho}(X)}{\Delta_0^{\vepsi,\rho}(X)}$. By the equation relating the torsions that we just found, it suffices to compute the twisted Reidemeister torsion for the other pieces.

\begin{prop}
\ 
\begin{enumerate}
\item $\tau_{\vepsi,\rho}(N)=\prod\limits_{i=1}^m\deti(a_i)^{s_i-1}$
\item $\tau_{\vepsi,\rho}\left(\bigsqcup\limits_{k,i} T_i^k\right)=1$
\item $\tau_{\vepsi,\rho}\left(\bigsqcup\limits_{k=1}^s S^3_k\backslash L_k\right)=\prod\limits_{k=1}^s \deti(\beta_k)^{d_k-2}$
\end{enumerate}
\label{N}
\end{prop}

\begin{proof}
First of all, by the multiplicativity of the torsion (which can be inferred from \cref{lemtorpol}), we have that
$$
\begin{array}{lcr}
\tau_{\vepsi,\rho}(N)=\prod\limits_{i=1}^m \tau_{\vepsi,\rho}(F_i\times S^1), & & \tau_{\vepsi,\rho}\left(\bigsqcup\limits_{k,i} T_i^k\right)=\prod\limits_{k,i} \tau_{\vepsi,\rho}(T_i^k).\\
\end{array}
$$
Using the presentations given in equations (\ref{eqnLink}) and (\ref{eqnFi}) and Fox Calculus (\cite[Section 4]{kirk}), we can compute the twisted Reidemeister torsion of all the spaces involved, namely
$$
\begin{array}{lcll}
\tau_{\vepsi,\rho}(F_i\times S^1)=\deti(a_i)^{s_i-1} & \text{ \ \ \ \ \ \ \ \ \ \ \ }& \text{for all }i=1,\ldots,m& \\
\tau_{\vepsi,\rho}(T_i^k)=1 & \text{ \ \ \ \ \ \ \ \ \ \ \ }& \text{for all }k,i& \\
\tau_{\vepsi,\rho}(S^3_k\backslash L_k)=\deti(\beta_k)^{d_k-2}& \text{ \ \ \ \ \ \ \ \ \ \ \ }& \text{for all }k=1,\ldots,s &\text{ \cite[Proposition 2.9]{maxtommy}}.\\ 
\end{array}
$$
\end{proof}

Now, we can use \cref{N} and equation (\ref{tor2}) to get
$$
\tau_{\vepsi,\rho}(X)=\prod_{k=1}^s \deti(\beta_k)^{d_k-2}\prod_{i=1}^m\deti(a_i)^{s_i-1}
$$
where this equality is defined up to multiplication by a unit of $\Ft$.

Hence
$$
\Delta_1^{\vepsi,\rho}(X)=\left(\prod_{k=1}^s \deti(\beta_k)^{d_k-2}\right)\cdot\left(\prod_{i=1}^m\deti(a_i)^{s_i-1}\right)\cdot\Delta_0^{\vepsi,\rho}(M)
$$
so, $\Delta_1^{\vepsi,\rho}(X)$ divides $$\left(\prod_{k=1}^s \deti(\beta_k)^{d_k-2}\right)\cdot\left(\prod_{i=1}^m\deti(a_i)^{s_i-1}\right)\cdot\gcd_{i=1,\ldots,m}\left\{\deti(a_i)\right\}.$$
Now, by \cref{corroots} and the fact that $W^*$ is homotopy equivalent to $X$, the proof of \cref{alexlines} is complete.
\end{proof}

\begin{rem}[Twisted Alexander polynomials of the boundary manifold]
Let $\cA=\{l_1,\ldots,l_m\}\subset \C^2$ be an essential line arrangement, and let $l_0=\C\P^2\backslash\C^2$ be the line at infinity. We consider the projective line arrangement $\cA'=\cA\cup\{l_0\}\subset \C\P^2$. The boundary manifold $B$ of the affine arrangement $\cA$ is the boundary of the manifold obtained by gluing balls around the singular points of the arrangement $\cA'$ and tubes around the smooth part of the lines, similar to what we did in the construction of $W^*$.

We have that the map induced by inclusion $\pi_0(B)\longrightarrow\pi_0(M)$ is an isomorphism, since both spaces are connected, and $\pi_1(B)\longrightarrow \pi_1(M)$ is an epimorphism, by a Lefschetz type argument. Thus, $M$ is obtained from $B$ by adjoining cells of dimension $\geq 2$, which as we have seen in the proof of \cref{dom} is enough to show that
$$
\Delta_0^{\vepsi,\rho}(M)=\Delta_0^{\vepsi,\rho}(B)
$$
and that
$$
\Delta_1^{\vepsi,\rho}(M)\text{ divides }\Delta_1^{\vepsi,\rho}(B),
$$
provided that $\Ht{1}{B}$ is a torsion $\Ft$-module. Moreover, following the proof of \cref{alexlines}, we conclude that $\Ht{1}{B}$ is indeed torsion (because $C_*^{\vepsi,\rho}(B,\F(t))$ is acyclic) and
$$
\frac{\Delta_1^{\vepsi,\rho}(B)}{\Delta_0^{\vepsi,\rho}(B)}=\left(\prod_{k=1}^s \deti(\beta_k)^{d_k-2}\right)\cdot\left(\prod_{i=0}^m\deti(a_i)^{\widetilde{s_i}-2}\right)
$$ 
where $s$ is the number of singular points of the \textbf{projective} arrangement, the $\beta_k$'s are certain distinguished loops near each of the singular points as in \cref{defbeta}, $\widetilde{s_i}$ is the number of singular points of the projective arrangement on the line $l_i$, and $a_i$ is a positively oriented meridian around the line $l_i$. This agrees with the result obtained in a different way by Cohen and Suciu in \cite[Theorem 5.2]{suciu} for multivariable twisted Alexander polynomials, as well as with the dibisibility result \cite[Theorem 5.6]{cogo} in the case where $\rho$ is a unitary representation. 

Note that, if $s_i$ is the number of singular points of the \textbf{affine} arrangement on the line $l_i$, for $i=1,\ldots, m$, then $\widetilde{s_i}=s_i+1$, so we can see that
$$
\Delta_1^{\vepsi,\rho}(W^*)\text{ divides }\Delta_1^{\vepsi,\rho}(B),
$$
and we conclude that the punctured stratified tubular neighborhood $W^*$ constitutes a better bound than the boundary manifold $B$ for the roots of the first twisted Alexander polynomial of $M$.
\label{remboundary}
\end{rem}

\begin{rem}
For \cref{zero} we do not need that $\vepsi$ be a positive epimorphism, just that it is a non-trivial map. For \cref{alexlines} to hold, we just need that $\vepsi$ takes non-zero values on the distinguished loops that appear in the formula of the twisted Alexander polynomial of $W^*$, as one can see in the proof.
\label{remepsi}
\end{rem}

In some cases, we can refine the result given by \cref{alexlines} as follows:

\linestube

\begin{proof}
Let $1\leq i\leq l$. Let $\B_{k,i}^4$ be a small ball around the singular point $P_k^i$, with boundary $S_{k,i}^3$, and let $L_{k_i}\subset S_{k,i}^3$ be the link of the singularity of $H$ at the point $P_k^i$. We will follow the proof of \cref{alexlines} and use the notation introduced there, but this time we define $X_i$ (instead of $X$) as the result of gluing $F_i\times S^1$ and $\bigsqcup\limits_{k=1}^{s_i}S_{k,i}^3\backslash L_{k_i}$ along the correspondig tori.

$X_i$ is connected, so the map induced by inclusion $\pi_0(X_i)\longrightarrow \pi_0(M)$ is an isomorphism. Moreover, since no other line in $\cA$ is parallel to $H_i$, we can see by a Lefschetz type argument that the map $\pi_1(X_i)\longrightarrow \pi_1(M)$ induced by inclusion is an epimorphism. Thus, $M$ is obtained from $X_i$ by adjoining cells of dimension $\geq 2$, which as we have seen in the proof of \cref{dom}, is enough to show that
$$
\Ht{1}{X_i}\longrightarrow \Ht{1}{M}
$$
is an epimorphism. Moreover, following the proof of \cref{alexlines}, we can show that all of the complexes involved in the Mayer-Vietoris short exact sequence of complexes with coefficients in $\F(t)$ except for $C_*^{\vepsi,\rho}(X_i,\F(t))$ are acyclic, so the long exact sequence in homology will tell us that $C_*^{\vepsi,\rho}(X_i,\F(t))$ is acyclic as well. In particular, $\Ht{1}{X_i}$ is torsion, and $\Delta_1^{\vepsi,\rho}(M)$ divides $\Delta_1^{\vepsi,\rho}(X_i)$ for all $i=1,\ldots, l$.

Following the proof of \cref{alexlines}, we get that
$$
\Delta_1^{\vepsi,\rho}(X_i)=\left(\prod_{k=1}^{s_i} \deti(\beta_{k,i})^{d_k^i-2}\right)\cdot\deti(a_i)^{s_i-1} \cdot \Delta_0^{\vepsi,\rho}(M)
$$
for all $i=1,\ldots, l$, and the result follows immediately by \cref{zero}.
%This result is going to be a direct application of the Fox Calculus method described in \cite[p. 6]{cogo}, as well as the presentation of the fundamental group of a plane curve complement given in \cite[Section 2]{lib2}.
\end{proof}

\subsection{Higher-dimensional case}
\label{ssgeneral}
\label{ssgeneralcase}
%We will follow the notation of \cite[Section 3]{libgober}.
Let $\mathcal{A}=\{H_1,\ldots,H_m\}$ be an essential hyperplane arrangement in $\C^n$. We consider the natural stratification of $H=\bigcup\limits_{i=1}^m H_i$, the one in which two points $P$ and $P'$ in $H$ lie in the same stratum if the collections of hyperplanes in the arrangement containing $P$ and $P'$ coincide. Let $\Sigma_1^k,\ldots,\Sigma_{s_k}^k$ be the collection of connected strata of (complex) dimension $k$. For each stratum, we define the multiplicity $m(\Sigma_i^k)$ as the number of hyperplanes in $\mathcal{A}$ containing a point from this stratum.

%Let $W$ be the closed tubular neighborhood of $H$ with boundary $V$ (as in Theorem \ref{tocho}), and let $U:=W\backslash (W\cap H)$. According to \cite[Theorem 2.3]{morse}, the inclusion $V\hookrightarrow U$ is a homotopy equivalence. Using the same argument following the flow lines of a normalized gradient field used to prove this last fact in \cite{morse}, one can also see that the inclusion $\Int(U)\hookrightarrow U$ is a homotopy equivalence.\\

\begin{rem}
By \cref{corroots}, the zeros of the $i$-th twisted Alexander polynomials of our arrangement complement $M$  are among the zeros of the $i$-th twisted Alexander polynomial of a punctured stratified tubular neighborhood $W^*$ of the arrangement, for $i=0,\ldots,n-1$. This observation prompts us to study what the zeros of the twisted Alexander polynomials of $W^*$ could be.
\label{rem}
\end{rem}

Let $$\{\mathcal{S}_l^k \mid k=0,\ldots,n-1; l=1,\ldots,s_k\}$$ be a collection of open subsets of $W^*$, each of which fibers over the corresponding stratum $\Sigma_l^k$, and chosen so that their union is $W^*$. These open subsets can be taken to be the tubular neighborhoods of open subsets of the strata that appeared in the construction of $W$ (before \cref{tube}) minus $H$. The fiber of
$
\mathcal{S}_l^k\longrightarrow \Sigma_l^k
$
is a central hyperplane arrangement complement consisting on $m(\Sigma_l^k)$ hyperplanes in $\C^{n-k}$. As it is pointed out in \cite[p. 5]{libgober}, if $k_1\geq k_2$, then $\mathcal{S}_{l_1}^{k_1}\cap\mathcal{S}_{l_2}^{k_2}$ is not empty if and only if the stratum $\Sigma_{l_2}^{k_2}$ is in the closure of the stratum $\Sigma_{l_1}^{k_1}$, and, in this intersection, the fibration that we consider is the one from $\mathcal{S}_{l_1}^{k_1}\rightarrow \Sigma_{l_1}^{k_1}$ restricted to it.

Let $\W:=\V^*=\Hom_\F(\V,\F)$ be the dual vector space of $\V$, and let $$\rho^*:\pi_1(M)\longrightarrow \GL(\W)$$
be the dual representation of $\rho:\pi_1(M)\longrightarrow \GL(\V)$, given by
$$
(w\cdot \alpha)(v)=w(v\cdot \alpha^{-1})
$$
for every $w\in \W$, $\alpha\in\pi_1(M)$ and $v\in\V$.

We consider the involution given by
$$
\f{\overline{\cdot}}{\Ft}{\Ft}{t}{\overline t:=t^{-1}}
$$
and we define the \textbf{conjugate $\Ft$-module structure} of an $\Ft$-module as the one obtained by composing the $\Ft$-module structure with the involution $\overline{\cdot}$. Then, as justified in \cite[p. 6 and p. 17]{maxtommy}, we have that
$$
\overline{H}^i(W^*,\cL_{\vepsi,\rho^*})\cong H^i\left(\Hom_{\Ft}\left(C_*^{\vepsi,\rho}(W^*,\Ft),\Ft\right)\right)
$$
for all $i$, where $\overline{H}^i(W^*,\cL_{\vepsi,\rho^*})$ means the $\Ft$-module $H^i(W^*,\cL_{\vepsi,\rho^*})$ with the conjugate module structure, and $\cL_{\vepsi,\rho^*}$ is the local system of $\Ft$-modules induced by the tensor representation $\vepsi\otimes\rho^*$. Therefore, the Universal Coefficient Theorem (UCT, for short) applied to the principal ideal domain $\Ft$ yields
$$
\overline{H}^i(W^*,\cL_{\vepsi,\rho^*})\cong\Hom_{\Ft}(\Ht{i}{W^*},\Ft)\oplus \Ext_{\Ft}(\Ht{i-1}{W^*},\Ft).
$$
Now, applying \cref{corroots} we get that $\Ht{i}{W^*}$ is a torsion $\Ft$-module for all $i\leq n-1$, so by the UCT, we get that
$$
\overline{H}^i(W^*,\cL_{\vepsi,\rho^*})\cong \Ext_{\Ft}(\Ht{i-1}{W^*},\Ft)\cong \Ht{i-1}{W^*}
$$
for all $i\leq n-1$.

\begin{rem}
In fact, we will see later on that $\Ht{i}{W^*}$ is a torsion $\Ft$-module for all $i$, so
$$
\overline{H}^i(W^*,\cL_{\vepsi,\rho^*})\cong \Ht{i-1}{W^*}
$$
for all $i$.
\label{remalli}
\end{rem}

Hence, the order of $H^i(W^*,\cL_{\vepsi,\rho^*})$ is $\overline{\Delta_{i-1}^{\vepsi,\rho}(W^*)(t)}=\Delta_{i-1}^{\vepsi,\rho}(W^*)(t^{-1})$ for all $i$. As indicated in \cref{rem}, we are interested in studying the zeros of $\Delta_{i}^{\vepsi,\rho}(W^*)$ for all $i$, or equivalently, the inverses of the zeros of the order of $H^{i+1}(W^*,\cL_{\vepsi,\rho^*})$ for all $i$.

Let us consider the Mayer-Vietoris spectral sequence (\cite[II.5.4]{godement}) of the sheaf $\mathcal{L}_{\vepsi,\rho^*}$ associated to the open covering
$$
\{\mathcal{S}_l^k\mid k=0,\ldots,n-1; l=1,\ldots,s_k\}.
$$
The first page of this spectral sequence is
\begin{equation}
E_1^{p,q}=\bigoplus H^q(\cS_{l_1}^{k_1}\cap\ldots\cap \cS_{l_{p+1}}^{k_{p+1}},\cL_{\vepsi,\rho^*}),
\label{ss1}
\end{equation}
where the direct sum is taken over all the possible non-empty intersections of $p+1$ open sets of our covering. This spectral sequence converges to $H^{p+q}(W^*,\cL_{\vepsi,\rho^*})$.

Let us study the elements in the first page of our Mayer-Vietoris spectral sequence, namely, the elements of the form
$$
H^q(\cS_{l_1}^{k_1}\cap\ldots\cap \cS_{l_{p+1}}^{k_{p+1}},\cL_{\vepsi,\rho^*}).
$$
Without loss of generality, we can assume that $k_1\geq\ldots\geq k_{p+1}$, and that $\Sigma_{l_j}^{k_j}\subset\overline{\Sigma_{l_1}^{k_1}}$ for every $j=2,\ldots,p+1$. Let $f:\cS_{l_1}^{k_1}\rightarrow \Sigma_{l_1}^{k_1}$ be the fibration. We consider a good cover $\cU$ of $f(\cS_{l_1}^{k_1}\cap\ldots\cap \cS_{l_{p+1}}^{k_{p+1}})$, which is an open set of the manifold $\Sigma_{l_1}^{k_1}$, and thus it is a manifold. By good cover we mean an open cover where all open sets and all finite intersections of those open sets are contractible. Let
$$
\cV:=\{f^{-1}(A) \mid A\in \cU\}
$$
We consider the Mayer Vietoris spectral sequence of the sheaf $\mathcal{L}_{\vepsi,\rho^*}$ associated to the open covering $\cV$. The first page of this spectral sequence is
\begin{equation}
E_1^{p,q}=\bigoplus H^q(B_{r_1}\cap\ldots\cap B_{r_{p+1}},\cL_{\vepsi,\rho^*})
\label{ss2}
\end{equation}
where the direct sum is taken over all the possible non-empty intersections $B_{r_1}\cap\ldots\cap B_{r_{p+1}}$ of $p+1$ open sets of our covering $\cV$. This spectral sequence converges to $H^{p+q}(\cS_{l_1}^{k_1}\cap\ldots\cap \cS_{l_{p+1}}^{k_{p+1}},\cL_{\vepsi,\rho^*})$.

Note that, since non-empty finite intersections of open sets in $\cU$ are contractible, and the map $f$ restricted to
$
\cS_{l_1}^{k_1}\cap\ldots\cap \cS_{l_{p+1}}^{k_{p+1}}
$
is a locally trivial fibration with fiber $F_{l_1,k_1}$, which is the complement of a central hyperplane arrangement consisting on $m(\Sigma_{l_1}^{k_1})$ hyperplanes in $\C^{n-k_1}$, then
$
B_{r_1}\cap\ldots\cap B_{r_{p+1}}
$
is homeomorphic to the product of $F_{l_1,k_1}$ and a contractible open set, and thus it is homotopy equivalent to $F_{l_1,k_1}$. Thus,
$$
H^q(B_{r_1}\cap\ldots\cap B_{r_{p+1}},\cL_{\vepsi,\rho^*})\cong H^q(F_{l_1,k_1},\cL_{\vepsi,\rho^*}).
$$

Note that central hyperplane arrangements are homotopy equivalent to the complement in $S^{2n-1}$ of their links at infinity, so by \cite[proof of Theorem 4.1]{maxtommy}, $\Ht{q}{F_{l_1,k_1}}$ (and consequently $H^q(F_{l_1,k_1},\cL_{\vepsi,\rho^*})$) are torsion for all $q$. This implies that all of the elements in spectral sequence (\ref{ss2}) are torsion modules. In fact, by \cite[Theorem 4.11]{maxtommy}, the zeros of the order of $H^q(F_{l_1,k_1},\cL_{\vepsi,\rho^*})$ are among those of the order of the cokernel of the endomorphism
$$t^{-\vepsi(\gamma_\infty(F_{l_1,k_1}))}\rho^*(\gamma_\infty(F_{l_1,k_1}))^{-1}-\Id\in\End(\Ft\otimes_\F \V)$$
where $\gamma_\infty(F_{l_1,k_1})$ is a loop around the hyperplane at infinity in $(\C\P)^{n-k_1}$. Note that $\vepsi$ restricted to $\pi_1(F_{l_1,k_1})$ is not necessarily an epimorphism, but the proof of \cite[Theorem 4.11]{maxtommy} does not require it.

Let us try to describe these ``loops at infinity'' that we are using in more detail. Let $H_{t_1},\ldots,H_{t_{m(\Sigma_{l_1}^{k_1})}}$ be the hyperplanes going through the stratum $\Sigma_{l_1}^{k_1}$ associated to the fiber $F_{l_1,k_1}$. We have that $\gamma_\infty(F_{l_1,k_1})$ has an expression of the form
$$
(\gamma_{t_1}\cdot\ldots\cdot\gamma_{t_{m(\Sigma_{l_1}^{k_1})}})^{-1}
$$
where $\gamma_{t_1},\ldots,\gamma_{t_{m(\Sigma_{l_1}^{k_1})}}$ are an appropriate choice of meridians around each component of the central hyperplane arrangement given by $F_{l_1,k_1}$ in the appropriate order.

%By spectral sequence (\ref{ss2}), we have that the $H^{p+q}(\cS_{l_1}^{k_1}\cap\ldots\cap \cS_{l_{p+1}}^{k_{p+1}},\cL_{\vepsi,\rho}^{\vee})$ are torsion and the zeroes of their order lie among those of the order of the cokernels of $t^{\vepsi(\gamma_\infty(F))}\rho(\gamma_\infty(F))-Id$, where $F$ ranges among all the possible fibers corresponding to the fibrations over strata. There are a finite number of strata, so there are a finite number of such fibers $F$.

Note that $\vepsi(\gamma_\infty(F_{l_1,k_1}))<0$, so the order of the cokernel of $$t^{-\vepsi(\gamma_\infty(F_{l_1,k_1}))}\rho^*(\gamma_\infty(F_{l_1,k_1}))^{-1}-\id$$
is exactly
$$
\det(t^{-\vepsi(\gamma_\infty(F_{l_1,k_1}))}\rho^*(\gamma_\infty(F_{l_1,k_1}))^{-1}-\id)
$$

By the discussion above, the zeros of the order of $H^q(F_{l_1,k_1},\cL_{\vepsi,\rho^*})$ are among those of $$
\det(t^{-\vepsi(\gamma_\infty(F_{l_1,k_1}))}\rho^*(\gamma_\infty(F_{l_1,k_1}))^{-1}-\id).
$$
Using the spectral sequence \eqref{ss2}, we get that the zeros of the order of $$H^{q}(\cS_{l_1}^{k_1}\cap\ldots\cap \cS_{l_{p+1}}^{k_{p+1}},\cL_{\vepsi,\rho^*})$$
are among the zeros of $\det(t^{-\vepsi(\gamma_\infty(F_{l_1,k_1}))}\rho^*(\gamma_\infty(F_{l_1,k_1}))^{-1}-\id)$.

Now, by using spectral sequence (\ref{ss1}), we see that $H^i (W^*,\cL_{\vepsi,\rho^*})$ is torsion for all $i$. By the Universal Coefficient Theorem, this means that $\Ht{i}{W^*}$ is also torsion for all $i$, as anticipated in \cref{remalli}. Moreover, 
the zeros of the order of $H^{q}(W^*,\cL_{\vepsi,\rho^*})$ are among the zeros of
$$
\prod_{k=0}^{n-1}\prod_{l=1}^{s_k}\det(t^{-\vepsi(\gamma_\infty(F_{l,k}))}\rho^*(\gamma_\infty(F_{l,k}))^{-1}-\id)
$$
for all $q$.

Hence, by \cref{rem} and \cref{remalli}, and the fact that $\rho^*(\alpha)^{-1}=\rho(\alpha)^{T}$ for every $\alpha\in\pi_1(M)$ (seen as matrices in $\GL_n(\F)$), we get that the zeros of $\Delta_i^{\vepsi,\rho}(M)$ are among the zeros of
$$
\prod_{k=0}^{n-1}\prod_{l=1}^{s_k}\det(t^{\vepsi(\gamma_\infty(F_{l,k}))}\rho(\gamma_\infty(F_{l,k}))-\id).
$$

Thus, we have arrived to the following result.

\rootsgeneral

We can see that this result generalizes the one obtained in the line arrangement case. The meridians around the hyperplane at infinity in the line arrangement case are $\beta_k^{-1}$ ($k=1,\ldots, s$), which correspond to the 0-dimensional strata (the singular points); and $a_i^{-1}$ ($i=1,\ldots, m$), which correspond to the 1-dimensional strata.

\section{Applications}
\label{saplications}

\subsection{Topology of a hyperplane arrangement complement via twisted Alexander polynomials.}
\label{sstopology}

In the following example, we discuss how twisted Alexander polynomials can give us information about the topology of the complement of a line arrangement. In particular, they can be used to distinguish the homeomorphism type of certain line arrangement complements that are homotopy equivalent.

\begin{exm}
Let us consider a pair of line arrangements (the Falk arrangements) $\cA_1$ and $\cA_2$ shown in \cref{falk}, which are given by the zeros of $p_1(x,y)$ and $p_2(x,y)$ respectively, where
$$
\begin{array}{c}
p_1(x,y)=(x+1)(x-1)(x+y)y(x-y)\\
p_2(x,y)=(x+1)(x-1)(y+1)(y-1)(x-y-1).\\
\end{array}
$$

\begin{figure}[h]
\centering
\begin{minipage}{0.46\textwidth}
\centering
\begin{tikzpicture}
\draw (-1,1.5) -- (-1,-1.5)node[pos=1,anchor=north]{$l_1$};
\draw (1,1.5) -- (1,-1.5)node[pos=1,anchor=north]{$l_2$};
\draw (-1.5,1.5) -- (1.5,-1.5)node[pos=1,anchor=north west]{$l_3$};
\draw (-1.5,0) -- (1.5,0)node[pos=1,anchor=north west]{$l_4$};
\draw (-1.5,-1.5) -- (1.5,1.5)node[pos=1,anchor=north west]{$l_5$};
\end{tikzpicture}
\end{minipage}
\begin{minipage}{0.46\textwidth}
\centering
\begin{tikzpicture}[scale=0.6]
\draw (-1,2.5) -- (-1,-2.5)node[pos=1,anchor=north]{$l_1$};
\draw (1,2.5) -- (1,-2.5)node[pos=1,anchor=north]{$l_2$};
\draw (-2.5,-1) -- (2.5,-1)node[pos=1,anchor=north west]{$l_3$};
\draw (-2.5,1) -- (2.5,1)node[pos=1,anchor=north west]{$l_4$};
\draw (-1.5,-2.5) -- (2.5,1.5)node[pos=1,anchor=south]{$l_5$};
\end{tikzpicture}
\end{minipage}
\caption{The real part of the Falk Arrangements $\cA_1$ and $\cA_2$.}
\label{falk}
\end{figure}

In \cite{falk}, Falk showed that the complements of these two arrangements are homotopy equivalent even though they are combinatorially quite different. These two complements are not homeomorphic, as shown by Jiang and Yau in \cite{yau}. In \cite{suciu}, Cohen and Suciu reproved that the complements are not homeomorphic by showing that the boundary manifolds of $\cA_1$ and $\cA_2$ are not homotopy equivalent, which they did by showing that their corresponding multivariable Alexander polynomials had a different number of distinct factors.

We will show that the boundary manifolds of $\cA_1$ and $\cA_2$ are not homotopically equivalent by showing that certain (one-variable) twisted Alexander polynomials of their boundary manifolds have a different number of distinct roots with multiplicity, thus reproving the result by Jiang and Yau by using simpler invariants.

\begin{proof}
Let $B_j$ be the boundary manifold of $\cA_j$, for $j=1,2$. Let $M_j$ be the complement in $\C^2$ of the arrangement $\cA_j$, for $j=1,2$. We will argue by contradiction. Let us assume that there exists a homotopy equivalence $$h:B_1\longrightarrow B_2$$
We denote by $h_*$ the map that $h$ induces on fundamental groups. Let $i_2:B_2\hookrightarrow M_2$ be the inclusion and $(i_2)_*$ the map it induces on fundamental groups. Let $$\vepsi:\pi_1(M_2)\longrightarrow \Z$$
be an epimorphism, and let
$$
\rho:\pi_1(M_2)\longrightarrow \C^*
$$
be a one dimensional representation. Restricting ourselves to one dimensional representations makes computing twisted Alexander polynomials so much easier, since they factor through the abelianization of $\pi_1(M_2)$ and we do not have to care about the conjugation of meridians due to the braiding in the fundamental group.

We abuse notation and also call $\vepsi$ and $\rho$ the maps induced by pulling back  $\vepsi$ and $\rho$ by $(i_2)_*$ and $(i_2)_*\circ h_*$ on $\pi_1(B_2)$ and $\pi_1(B_1)$ respectively. In this setting, since $h$ is a homotopy equivalence, we have that
$$
\frac{\Delta_1^{\vepsi,\rho}(B_1)}{\Delta_0^{\vepsi,\rho}(B_1)}=\frac{\Delta_1^{\vepsi,\rho}(B_2)}{\Delta_0^{\vepsi,\rho}(B_2)}
$$
up to multiplication by a unit of $\C[t^{\pm 1}]$. Thus, the set of non-zero roots with multiplicity corresponding to both sides should be the same. We will show that for some choice of $\vepsi$ and $\rho$, they are not, which will conclude our proof.

Let $a_i^j$ be a meridian around the line given by the $i$-th factor of $p_j(x,y)$, and let $a_0^j=\left(\prod_{i=1}^5 a_i^j\right)^{-1}$, for $j=1,2$ and $i=1,\ldots, 5$. The loop $a_0^j$ is not necessarily a meridian around the line at infinity (due to the order chosen in the multiplication), but will have the same image by $\vepsi$ and $\rho$ than said meridian. We denote by $a_{i_1 i_2 i_3}^j=a_{i_1}^j\cdot a_{i_2}^j \cdot a_{i_3}^j$. Note that, if the lines $l_{i_1}$, $l_{i_2}$ and $l_{i_3}$ intersect in a triple point, then $a_{i_1 i_2 i_3}^j$ will have the same image by $\vepsi$ and $\rho$ than the corresponding $\beta_k$.

We choose $\vepsi:\pi_1(M_2)\longrightarrow \Z$ to be an epimorphism such that all of the loops involved in the formulas for $\frac{\Delta_1^{\vepsi,\rho}(B_2)}{\Delta_0^{\vepsi,\rho}(B_2)}$ and $\frac{\Delta_1^{\vepsi,\rho}(B_1)}{\Delta_0^{\vepsi,\rho}(B_1)}$ given by \cref{remboundary} have a non-zero image by $\vepsi$ (recall \cref{remepsi}). This choice of $\vepsi$ depends on $h_*$, and generically, this condition on $\vepsi$ is satisfied. In that case, we have that

$$
\frac{\Delta_1^{\vepsi,\rho}(B_2)}{\Delta_0^{\vepsi,\rho}(B_2)}=
\left(\prod_{i=1}^4 \rho(a_i^2)t^{\vepsi(a_i^2)}-1\right)^2\cdot(\rho(a_5^2)t^{\vepsi(a_5^2)}-1)^3\cdot (\rho(a_0^2)t^{\vepsi(a_0^2)}-1)\cdot(\rho(a_{034}^2)t^{\vepsi(a_{034}^2)}-1)\cdot(\rho(a_{012}^2)t^{\vepsi(a_{012}^2)}-1)
$$
so, up to multiplication by a unit of $\C[t^{\pm 1}]$,
\begin{equation}
\frac{\Delta_1^{\vepsi,\rho}(B_2)}{\Delta_0^{\vepsi,\rho}(B_2)}=\left(\prod_{i=1}^4 \rho(a_i^2)t^{\vepsi(a_i^2)}-1\right)^2\cdot(\rho(a_5^2)t^{\vepsi(a_5^2)}-1)^3 \cdot(\rho(a_0^2)t^{\vepsi(a_0^2)}-1)\cdot(\rho(a_{125}^2)t^{\vepsi(a_{125}^2)}-1)\cdot(\rho(a_{345}^2)t^{\vepsi(a_{345}^2)}-1).
\label{eqnB2}
\end{equation}
Also, we have that
$$
\frac{\Delta_1^{\vepsi,\rho}(B_1)}{\Delta_0^{\vepsi,\rho}(B_1)}=\left(\prod_{i=0}^5 \left(\rho(a_i^1)t^{\vepsi(a_i^1)}-1\right)\right)^2(\rho(a_{012}^1)t^{\vepsi(a_{012}^1)}-1)(\rho(a_{345}^1)t^{\vepsi(a_{345}^1)}-1).
$$
Note that, up to multiplication by a unit of $\C[t^{\pm 1}]$, the last two factors are the same, so
\begin{equation}
\frac{\Delta_1^{\vepsi,\rho}(B_1)}{\Delta_0^{\vepsi,\rho}(B_1)}=\left(\prod_{i=0}^5 \left(\rho(a_i^1)t^{\vepsi(a_i^1)}-1\right)\right)^2(\rho(a_{345}^1)t^{\vepsi(a_{345}^1)}-1)^2.
\label{eqnB1}
\end{equation}

Now that we have fixed $\vepsi$, we can choose $\rho$ so that any root of  $(\rho(a)t^{\vepsi(a)}-1)$ is different than any root of $(\rho(b)t^{\vepsi(b)}-1)$ for different loops $a$ and $b$ involved in the formula (\ref{eqnB2}). That way, if we pick a given root of the term $(\rho(a_5^2)t^{\vepsi(a_5^2)}-1)^3$, we know it only appears $3$ times as a root of $\frac{\Delta_1^{\vepsi,\rho}(B_2)}{\Delta_0^{\vepsi,\rho}(B_2)}$. On the other hand, no non-zero root of $\frac{\Delta_1^{\vepsi,\rho}(B_1)}{\Delta_0^{\vepsi,\rho}(B_1)}$ can have odd multiplicity, so we have reached a contradiction.
\end{proof}
\label{exmfalk}
\end{exm}

\subsection{Twisted jump loci vs. twisted Alexander polynomials}

Let $\cA=\{H_1,\ldots, H_m\}$ be an essential hyperplane arrangement in $\C^n$, let $H=\bigcup\limits_{i}^m H_i$, and let $M=\C^n\backslash H$ be the arrangement complement in $\C^n$. Let $\V$ be an $n$-dimensional vector space over $\C$, and let
$$
\rho:\pi_1(M)\longrightarrow \GL(\V)
$$
be a representation. We denote by $V_\rho$ the corresponding $\V$-local system on $M$.

\begin{defn}
The \textbf{rank 1 homology jump loci of $M$ twisted by $\rho$} are defined to be
$$
\cV_i^k(M,\rho)=\left\{\eta\in\Hom(\pi_1(M),\C^*)\mid \dim_\C H_i(M,\cL_\eta\otimes V_\rho)\geq k\right\}
$$
for all $i,k\geq 0$, where $\cL_\eta$ is the rank $1$ local system on $M$ defined by $\eta$.
\end{defn}

There exists a natural isomorphism
$$
(\C^*)^m \xrightarrow{\cong}\Hom(\pi_1(M),\C^*)
$$
that takes any tuple $(z_1,\ldots,z_m)\in(\C^*)^m$ to the unique morphism that sends the positively oriented meridians around the line $H_i$ to $z_i$ for all $i=1,\ldots, m$. In this way, we can see the homology jump loci $\cV_i^k(M,\rho)$ inside of $(\C^*)^m$.

\begin{rem}
Suppose that $\rho$ is a one dimensional representation, which corresponds to $(\rho_1,\ldots,\rho_m)$ in $(\C^*)^m$ via the natural isomorphism described above. Let $$\cV_i^k(M):=\left\{\eta\in\Hom(\pi_1(M),\C^*)\mid \dim_\C H_i(M,\cL_\eta)\geq k\right\}$$ be the untwisted homology jump loci. Then, we have the following correspondence between subsets of $(\C^*)^m$:
$$
(x_1,\ldots,x_m)\in \cV_i^k(M,\rho) \Leftrightarrow (x_1\rho_1,\ldots,x_m\rho_m)\in \cV_i^k(M).
$$
\end{rem}

Let $\vepsi:\pi_1(M)\longrightarrow \Z$ be an epimorphism. It induces the following map
$$
\f{\vepsi^*}{\C^*}{\Hom(\pi_1(M),\C^*)}{a}{h_a\circ\vepsi}
$$
where $h_a:\Z\longrightarrow \C^*$ is the only group homomorphism taking $1$ to $a$. Since $\vepsi$ is an epimorphism, we have that the image of $\vepsi^*$ is naturally isomorphic to $\C^*$. With this notation, we have the following result that relates the zeros of twisted Alexander polynomials of $M$ and the twisted rank 1 homology jump loci.

\begin{prop}
Let $\F=\C$. Then,
$$\{t\in\C^*\mid\Delta_i^{\vepsi,\rho}(M)(t)\cdot\Delta_{i-1}^{\vepsi,\rho}(M)(t)=0\}=\cV_i^1(M,\rho)\cap \Ima(\vepsi^*)$$
for $0\leq i\leq n-1$, and
$$
\{t\in\C^*\mid\Delta_{n-1}^{\vepsi,\rho}(M)(t)=0\}=\cV_{n}^{(\dim_\C\V\cdot|\chi(M)|+1)}(M,\rho)\cap \Ima(\vepsi^*)
$$
where $\cV_i^k(M,\rho)\cap \Ima(\vepsi^*)$ is seen as a subset of $\C^*$.
\end{prop}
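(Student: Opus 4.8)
The plan is to realize the local system homology appearing in the jump loci as the homology of a \emph{specialization} of the $\Ft$-chain complex that defines the twisted Alexander modules, and then to read off everything from the structure theory of finitely generated modules over the principal ideal domain $\Ft$. For $a\in\C^*$ let $\C_a$ denote $\C$ viewed as an $\Ft$-module via $t\mapsto a$ (this requires $a\neq 0$, which is automatic since $\cV_i^k\subset\Hom(\pi_1(M),\C^*)$). The first thing I would verify carefully is that $\C_a\otimes_\Ft C_*^{\vepsi,\rho}(M,\Ft)$ is exactly the cellular chain complex computing $H_*(M,\cL_{\vepsi^*(a)}\otimes V_\rho)$: unwinding \cref{defntwistmod}, tensoring down identifies $\C_a\otimes_\F\V$ with $\V$ equipped with the right $\F[\pi_1(M)]$-action $v\cdot\alpha=a^{\vepsi(\alpha)}\,v\,\rho(\alpha)$, i.e. the representation $\alpha\mapsto a^{\vepsi(\alpha)}\rho(\alpha)$, which is precisely the local system $\cL_{\vepsi^*(a)}\otimes V_\rho$. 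The bookkeeping here (row vectors versus left actions, $\rho$ versus $\rho^{*}$, and the meridian-to-$z_i$ normalization of $(\C^*)^m\cong\Hom(\pi_1(M),\C^*)$) is where I expect the only real subtlety to lie; everything afterward is formal.

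Next I would invoke the universal coefficient theorem for chain complexes of free modules over the PID $\Ft$ (valid because $\Ft$ has global dimension one): for every $i$ there is a short exact sequence
$$
0\longrightarrow \Ht{i}{M}\otimes_\Ft\C_a\longrightarrow H_i(M,\cL_{\vepsi^*(a)}\otimes V_\rho)\longrightarrow \mathrm{Tor}_1^{\Ft}(\Ht{i-1}{M},\C_a)\longrightarrow 0 .
$$
The elementary observation I would then record is that for a torsion $\Ft$-module $T$ with order $\Delta$, both $T\otimes_\Ft\C_a$ and $\mathrm{Tor}_1^{\Ft}(T,\C_a)$ are nonzero if and only if $\Delta(a)=0$: writing $T\cong\bigoplus_j\Ft/(p_j^{e_j})$ one gets $T\otimes_\Ft\C_a\cong\bigoplus_j\C/(p_j(a)^{e_j}\C)$ and $\mathrm{Tor}_1^{\Ft}(T,\C_a)\cong\bigoplus_j\ker\!\big(p_j(a)^{e_j}\colon\C\to\C\big)$, and either of these is nonzero exactly when some $p_j(a)=0$, i.e. when $\prod_j p_j^{e_j}(a)=\Delta(a)=0$.

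Now I would assemble the two cases. For $0\le i\le n-1$, \cref{torsion} gives that $\Ht{i}{M}$ and $\Ht{i-1}{M}$ are torsion (with the conventions $\Ht{-1}{M}=0$ and $\Delta_{-1}^{\vepsi,\rho}(M)=1$), so the short exact sequence together with the observation above shows that $H_i(M,\cL_{\vepsi^*(a)}\otimes V_\rho)\neq 0$ iff $\Delta_i^{\vepsi,\rho}(M)(a)=0$ or $\Delta_{i-1}^{\vepsi,\rho}(M)(a)=0$, i.e. iff $\Delta_i^{\vepsi,\rho}(M)(a)\cdot\Delta_{i-1}^{\vepsi,\rho}(M)(a)=0$; under the identification $\Im(\vepsi^*)\cong\C^*$, $a\mapsto\vepsi^*(a)$, this is the first asserted equality. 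For the top degree, \cref{torsion} gives that $\Ht{n}{M}$ is free of rank $r:=\dim_\C(\V)\cdot|\chi(M)|$ and $\Ht{n+1}{M}=0$, so $\Ht{n}{M}\otimes_\Ft\C_a\cong\C^r$ for every $a\in\C^*$, while $\Ht{n-1}{M}$ is torsion with order $\Delta_{n-1}^{\vepsi,\rho}(M)$; hence
$$
\dim_\C H_n(M,\cL_{\vepsi^*(a)}\otimes V_\rho)=r+\dim_\C\mathrm{Tor}_1^{\Ft}(\Ht{n-1}{M},\C_a),
$$
which is $\ge r+1$ exactly when $\mathrm{Tor}_1^{\Ft}(\Ht{n-1}{M},\C_a)\neq 0$, i.e. exactly when $\Delta_{n-1}^{\vepsi,\rho}(M)(a)=0$. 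Translating through $\vepsi^*$ gives the second equality, with $r+1=\dim_\C(\V)\cdot|\chi(M)|+1$.

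As noted, the main obstacle is the opening identification of the specialized complex with the correct local system chain complex, including the conventions and the sign making $(-1)^n\chi(M)=|\chi(M)|$; the rest is homological algebra over a PID plus \cref{torsion}.
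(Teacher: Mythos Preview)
Your proposal is correct and follows essentially the same approach as the paper: the paper obtains the same information by tensoring the coefficient sequence $0\to\Ft\xrightarrow{t-a}\Ft\to\C_a\to 0$ with the twisted chain complex and passing to the induced long exact sequence in homology, which, once broken into short exact pieces, is exactly the universal coefficient sequence you invoke (with $\mathrm{coker}(t-a)=-\otimes\C_a$ and $\ker(t-a)=\mathrm{Tor}_1(-,\C_a)$). Both arguments then finish by the same dimension count over the PID $\Ft$ together with \cref{torsion}.
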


\begin{proof}
We will follow the notation in \cite[Theorem 4.5]{dn}, where the non-twisted case is discussed.

Let $a\in\C^*$. The homomorphism $\vepsi^*(a)$ defines a $1$-dimensional local system, which we will call $\cL_a$. We consider the following short exact sequence of vector spaces over $\C$:
$$
0\longrightarrow \C[t^{\pm 1}]\xrightarrow{t-a} \C[t^{\pm 1}] \xrightarrow{t=a} \C\longrightarrow 0
$$

Tensoring by $\V$, we obtain the following short exact sequence of vector spaces over $\C$:
\begin{equation}
0\longrightarrow \C[t^{\pm 1}]\otimes_{\C}\V\xrightarrow{f} \C[t^{\pm 1}]\otimes_{\C}\V \xrightarrow{g} \C\otimes_{\C}\V\longrightarrow 0
\label{eqnses}
\end{equation}
The vector space $\C[t^{\pm 1}]\otimes_\C\V$ can be given the structure of a right $\C[\pi_1(M)]$-module, as we described in \cref{defntwistmod}. Moreover, $\C\otimes_\C\V\cong \V$ can also be given the structure of a right $\C[\pi_1(M)]$-module, with the right action given by
$$
v\cdot \alpha=a^{\vepsi(\alpha)}v \cdot \rho(\alpha)
$$
for every $v\in\V$ and $\alpha\in \pi_1(X)$, where $v$ is regarded as a row vector and $\rho(\alpha)$ as a square matrix.

We can check that both $f$ and $g$ respect the right $\C[\pi_1(M)]$-module structure, so the short exact sequence (\ref{eqnses}) is also a short exact sequence of right $\C[\pi_1(M)]$-modules.

Let $\widetilde{M}$ be the universal cover of $M$. We have that $C_i(\widetilde{M},\C)$ is a free left $\C[\pi_1(M)]$-module for all $i\in\Z$, as explained in \cref{defntwistmod}. In particular, it is flat, so we can tensor (\ref{eqnses}) by $C_i(\widetilde{M},\C)$ to get
$$
0\longrightarrow C_i^{\vepsi,\rho}(M,\C[t^{\pm 1}])\longrightarrow C_i^{\vepsi,\rho}(M,\C[t^{\pm 1}])\longrightarrow \V\otimes_{\C[\pi_1(M)]} C_i(\widetilde{M},\C) \longrightarrow 0
$$

These short exact sequences for $i\in\Z$ extend to a short exact sequence of complexes (i.e. they are compatible with the differentials), so we get the corresponding long exact sequence in homology, namely
\begin{equation}
\ldots\longrightarrow\Htc{i}{M}\xrightarrow{t-a}\Htc{i}{M}\longrightarrow H_i(M,\cL_a\otimes V_\rho)\longrightarrow \Htc{i-1}{M}\longrightarrow \ldots
\label{eqnles}
\end{equation}

By \cref{torsion} and the fact that $\C[t^{\pm 1}]$ is a principal ideal domain, we get that, for $0\leq i\leq n-1$, the twisted Alexander modules have a primary decomposition of the form
$$
\Htc{i}{M}\cong \C[t^{\pm 1}]/\left((t-b_1)^{r_1}\right)\oplus\ldots\oplus \C[t^{\pm 1}]/\left((t-b_{l_i})^{r_{l_i}}\right).
$$

Let $N(a,i)$ be the number of direct summands in the $(t-a)$-torsion part of $\Htc{i}{M}$. We have that
$$
N(a,i)=\dim_\C \ker\left(\Htc{i}{M}\xrightarrow{t-a}\Htc{i}{M}\right).
$$

Let us consider (\ref{eqnles}) as a long exact sequence of vector spaces. By a dimension counting argument, we deduce that
$$
\dim_\C H_i(M,\cL_a\otimes V_\rho)=N(a,i)+N(a,i-1)
$$
for $0\leq i\leq n-1$. Note that $a$ is a zero of $\Delta_l^{\vepsi,\rho}(M)$ if and only if $N(a,l)\geq 1$. Thus
$$
\{t\in\C^*\mid\Delta_i^{\vepsi,\rho}(M)(t)\cdot\Delta_{i-1}^{\vepsi,\rho}(M)(t)=0\}=\cV_i^1(M,\rho)\cap \Ima(\vepsi^*)
$$
for $0\leq i\leq n-1$.

Taking into account that $\Htc{n}{M}$ is a free $\C[t^{\pm 1}]$-module of dimension $\dim_\C\V\cdot|\chi(M)|$ (\cref{torsion}), by a dimension counting argument in the long exact sequence (\ref{eqnles}), we have that
$$
\dim_\C H_n(M,\cL_a\otimes V_\rho)=\dim_\C\V\cdot|\chi(M)|+N(a,n-1).
$$
Thus,
$$
\{t\in\C^*\mid\Delta_{n-1}^{\vepsi,\rho}(M)(t)=0\}=\cV_{n}^{(\dim_\C\V\cdot|\chi(M)|+1)}(M,\rho)\cap \Ima(\vepsi^*).
$$
\end{proof}

The result that we just proved, along with the main results of \cref{sroots}, can give us some information about the rank 1 twisted homology jump loci of $M$. More specifically, the following corollaries follow from \cref{zero}, \cref{alexlines}, \cref{linestube} and \cref{rootsgeneral} respectively.

\begin{cor}
Let $\F=\C$. Using the same notation and under the same assumptions of \cref{zero}, we have that the points of
$$
\cV_0^1(M,\rho)\cap \Ima(\vepsi^*)
$$
are in one-to-one correspondence with the common roots of all of the dimension $\dim_\C\V$ minors of the column matrix with entries $$t^{\vepsi(a_i)}\rho(a_i)-\id\in \cM_{(\dim_\C\V)\times (\dim_\C\V)}(\C[t^{\pm 1}])$$
for $i=1,\ldots, m$.
\end{cor}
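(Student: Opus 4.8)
The plan is to read this off directly from the Proposition just proved together with \cref{zero}, so the argument will be short. First I would specialize the identity
$$
\{t\in\C^*\mid\Delta_i^{\vepsi,\rho}(M)(t)\cdot\Delta_{i-1}^{\vepsi,\rho}(M)(t)=0\}=\cV_i^1(M,\rho)\cap \Im(\vepsi^*)
$$
to $i=0$. Since $\Ht{-1}{M}=0$, its order $\Delta_{-1}^{\vepsi,\rho}(M)$ is a unit of $\C[t^{\pm 1}]$ and contributes no zeros, so the left-hand side becomes $\{t\in\C^*\mid\Delta_0^{\vepsi,\rho}(M)(t)=0\}$; moreover $\vepsi$ is a (positive, hence non-trivial) epimorphism, so by \cref{pzero} this $\Delta_0^{\vepsi,\rho}(M)$ is a genuine nonzero Laurent polynomial and its zero set in $\C^*$ is finite. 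Under the injection $\vepsi^*\colon\C^*\hookrightarrow\Hom(\pi_1(M),\C^*)$, which is precisely the identification of $\cV_i^k(M,\rho)\cap\Im(\vepsi^*)$ with a subset of $\C^*$ used in the Proposition, this says that the points of $\cV_0^1(M,\rho)\cap\Im(\vepsi^*)$ correspond bijectively to the zeros in $\C^*$ of $\Delta_0^{\vepsi,\rho}(M)$.

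Next I would translate ``zeros of $\Delta_0^{\vepsi,\rho}(M)$'' into ``common zeros of the minors'' using \cref{zero}, which identifies $\Delta_0^{\vepsi,\rho}(M)$ (up to a unit) with the greatest common divisor in $\C[t^{\pm 1}]$ of the $\dim_\C(\V)$-minors of the column matrix with blocks $t^{\vepsi(a_i)}\rho(a_i)-\id$, $i=1,\ldots,m$. Here I use the elementary fact that over the principal ideal domain $\C[t^{\pm 1}]$, in which every $t_0\in\C^*$ corresponds to the maximal ideal $(t-t_0)$, a point $t_0$ is a zero of the gcd of a finite family of Laurent polynomials (not all zero) if and only if it is a common zero of every member of the family: one direction is just divisibility, and for the other one $(t-t_0)$ divides each member and hence divides their gcd. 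Combining the two paragraphs gives exactly the stated one-to-one correspondence, realized concretely as the restriction of $(\vepsi^*)^{-1}$ to $\cV_0^1(M,\rho)\cap\Im(\vepsi^*)$.

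I do not expect any real obstacle: this corollary is a direct repackaging of the Proposition and \cref{zero}. The only points needing a sentence of care are the bookkeeping at $i=0$ (that $\Delta_{-1}^{\vepsi,\rho}(M)$ is a unit and so does not pollute the zero set), the observation that $\vepsi^*$ is injective so that ``in one-to-one correspondence'' is literal rather than up to identification, and the standard gcd-versus-common-root remark over $\C[t^{\pm 1}]$ noted above.
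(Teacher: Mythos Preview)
Your proof is correct and is exactly the argument the paper has in mind: the corollary is stated without its own proof, with the paper simply noting that it follows from the Proposition together with \cref{zero}. Your specialization to $i=0$, the observation that $\Delta_{-1}^{\vepsi,\rho}(M)$ is a unit, and the gcd-versus-common-roots remark over $\C[t^{\pm 1}]$ are precisely the bookkeeping needed to make that implication explicit.
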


\begin{cor}
Let $\F=\C$. Using the same notation and under the same assumptions of \cref{alexlines}, we have that both
$$
\cV_1^1(M,\rho)\cap \Ima(\vepsi^*)
$$
and
$$
\cV_2^{(\dim_\C\V\cdot|\chi(M)|+1)}(M,\rho)\cap \Ima(\vepsi^*)
$$
are contained in
$$
\left\{t\in\C^*\mid\left(\prod_{k=1}^s \deti(\beta_k)^{d_k-2}\right)\cdot\left(\prod_{i=1}^m\deti(a_i)^{s_i-1}\right)\cdot\Delta_0^{\vepsi,\rho}(M)(t)=0\right\}.
$$
\end{cor}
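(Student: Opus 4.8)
The plan is to combine the proposition relating twisted rank~1 homology jump loci to twisted Alexander polynomials (proved immediately above) with the divisibility statement of \cref{alexlines}, specializing everything to the line-arrangement case $n=2$. First I would read off the two instances of that proposition that are relevant here. Taking $i=1$ (which satisfies $0\le i\le n-1$) gives
$$
\cV_1^1(M,\rho)\cap \Im(\vepsi^*)=\{t\in\C^*\mid \Delta_1^{\vepsi,\rho}(M)(t)\cdot \Delta_0^{\vepsi,\rho}(M)(t)=0\},
$$
and the second formula with $n=2$ (so that $n-1=1$) gives
$$
\cV_2^{(\dim_\C(\V)\cdot|\chi(M)|+1)}(M,\rho)\cap \Im(\vepsi^*)=\{t\in\C^*\mid \Delta_1^{\vepsi,\rho}(M)(t)=0\}.
$$
In particular, both of the sets appearing in the corollary are contained in the zero set $Z$ of the single polynomial $\Delta_1^{\vepsi,\rho}(M)\cdot\Delta_0^{\vepsi,\rho}(M)$.

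Next I would bound $Z$ using \cref{alexlines}. By \cref{corroots} (for $n=2$), $\Delta_1^{\vepsi,\rho}(M)$ divides $\Delta_1^{\vepsi,\rho}(W^*)$, and part~(1) of \cref{alexlines} gives, up to a unit of $\Ft$,
$$
\Delta_1^{\vepsi,\rho}(W^*)=\left(\prod_{k=1}^s \det(t^{\vepsi(\beta_k)}\rho(\beta_k)-\id)^{d_k-2}\right)\cdot\left(\prod_{i=1}^m\det(\id-t^{\vepsi(a_i)}\rho(a_i))^{s_i-1}\right)\cdot\Delta_0^{\vepsi,\rho}(M).
$$
Thus every root of $\Delta_1^{\vepsi,\rho}(M)$ is a root of the product on the right, and every root of $\Delta_0^{\vepsi,\rho}(M)$ is trivially a root of it too, since $\Delta_0^{\vepsi,\rho}(M)$ occurs as a factor. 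Hence $Z$ is contained in the zero locus described in the corollary, and combining this with the previous paragraph finishes the argument.

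There is no genuine obstacle here: once the preceding proposition and \cref{alexlines} are in hand the argument is pure bookkeeping. The points that require a little care are (i) that in complex dimension $2$ the ``middle degree'' $n-1$ collapses to $1$, so that the two a~priori different jump-loci statements both feed into the single polynomial $\Delta_1^{\vepsi,\rho}(M)$; (ii) that the multiplicity exponent $\dim_\C(\V)\cdot|\chi(M)|+1$ in the second statement is exactly the free rank of $\Ht{2}{M}$ plus one, as recorded in \cref{torsion}; and (iii) that all the modules involved are torsion $\Ft$-modules, so that their orders and the roots thereof are meaningful — which is precisely the hypothesis under which the preceding proposition was established, guaranteed here by \cref{torsion} and \cref{corroots}.
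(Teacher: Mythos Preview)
Your proposal is correct and follows exactly the approach intended by the paper: the corollary is stated as an immediate consequence of the preceding proposition combined with \cref{alexlines}, and you have spelled out precisely this combination, specializing to $n=2$ and using part~(1) of \cref{alexlines} together with \cref{corroots} to bound the zeros of $\Delta_1^{\vepsi,\rho}(M)\cdot\Delta_0^{\vepsi,\rho}(M)$.
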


\begin{cor}
Let $\F=\C$. Using the same notation and under the same assumptions of \cref{linestube}, we have that both
$$
\cV_1^1(M,\rho)\cap \Ima(\vepsi^*)
$$
and
$$
\cV_2^{(\dim_\C\V\cdot|\chi(M)|+1)}(M,\rho)\cap \Ima(\vepsi^*)
$$
are contained in
$$
\bigcap_{i=1}^l\left\{t\in\C^*\mid\Delta_0^{\vepsi,\rho}(M)(t)\cdot\left(\prod_{k=1}^{s_i} \deti(\beta_{k,i})^{d_k^i-2}\right)\cdot\deti(a_i)^{s_i-1}=0\right\}.
$$
\end{cor}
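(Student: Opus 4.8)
The plan is to obtain this formally from the proposition just proved together with \cref{linestube}: the genuinely new input is that proposition, and everything else is bookkeeping. Since \cref{linestube} lives in the line arrangement setting, here $n=2$, so the proposition identifies $\cV_1^1(M,\rho)\cap\Im(\vepsi^*)$ with $\{t\in\C^*\mid\Delta_1^{\vepsi,\rho}(M)(t)\cdot\Delta_0^{\vepsi,\rho}(M)(t)=0\}$ and identifies $\cV_2^{(\dim_\C(\V)\cdot|\chi(M)|+1)}(M,\rho)\cap\Im(\vepsi^*)$ with $\{t\in\C^*\mid\Delta_1^{\vepsi,\rho}(M)(t)=0\}$. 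The second set is contained in the first, so I would reduce both claimed containments to the single statement that every zero in $\C^*$ of $\Delta_1^{\vepsi,\rho}(M)\cdot\Delta_0^{\vepsi,\rho}(M)$ lies in the intersection appearing in the statement.

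Next I would fix $i\in\{1,\dots,l\}$ and abbreviate
$$
Q_i(t)=\left(\prod_{k=1}^{s_i}\det(t^{\vepsi(\beta_{k,i})}\rho(\beta_{k,i})-\id)^{d_k^i-2}\right)\cdot\det(\id-t^{\vepsi(a_i)}\rho(a_i))^{s_i-1}.
$$
The proof of \cref{linestube} shows that $\Delta_1^{\vepsi,\rho}(M)$ divides $\Delta_1^{\vepsi,\rho}(X_i)=Q_i\cdot\Delta_0^{\vepsi,\rho}(M)$ for each such $i$. Hence every zero of $\Delta_1^{\vepsi,\rho}(M)$ is a zero of $Q_i\cdot\Delta_0^{\vepsi,\rho}(M)$, and of course so is every zero of $\Delta_0^{\vepsi,\rho}(M)$; therefore every zero of $\Delta_1^{\vepsi,\rho}(M)\cdot\Delta_0^{\vepsi,\rho}(M)$ belongs to $\{t\in\C^*\mid\Delta_0^{\vepsi,\rho}(M)(t)\cdot Q_i(t)=0\}$. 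Letting $i$ range over $\{1,\dots,l\}$ gives containment in $\bigcap_{i=1}^l\{t\in\C^*\mid\Delta_0^{\vepsi,\rho}(M)(t)\cdot Q_i(t)=0\}$, which is exactly the set in the statement; combined with the reduction above, both assertions follow.

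There is no real difficulty here, so the only point to watch is a matter of care: one must invoke the per-index divisibility $\Delta_1^{\vepsi,\rho}(M)\mid Q_i\cdot\Delta_0^{\vepsi,\rho}(M)$ extracted from the proof of \cref{linestube}, rather than only the factored bound $\Delta_1^{\vepsi,\rho}(M)\mid\big(\gcd_{r}\det(t^{\vepsi(a_r)}\rho(a_r)-\id)\big)\cdot\gcd_i Q_i$ appearing in its statement. From the factored bound alone, a zero of $\Delta_1^{\vepsi,\rho}(M)$ arising from the factor $\gcd_{r}\det(t^{\vepsi(a_r)}\rho(a_r)-\id)$ need not obviously be a zero of $\Delta_0^{\vepsi,\rho}(M)\cdot Q_i$ for every single $i$ (this is the kind of thing that could be an issue in a degenerate case such as $s_i=1$, i.e. when $\cA$ is a pencil), whereas the per-index form sidesteps the issue entirely. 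Beyond that, no computation is needed.
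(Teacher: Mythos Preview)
Your proposal is correct and follows the approach the paper intends: the paper does not prove this corollary separately but simply declares it a consequence of the preceding proposition together with \cref{linestube}, and your argument spells out exactly that deduction. Your remark that one should use the per-index divisibility $\Delta_1^{\vepsi,\rho}(M)\mid Q_i\cdot\Delta_0^{\vepsi,\rho}(M)$ established in the proof of \cref{linestube}, rather than only the factored bound in its statement, is a genuine point of care that the paper passes over in silence.
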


\begin{cor}
Let $\F=\C$. Using the same notation and under the same assumptions of \cref{rootsgeneral}, we have that
$$
\cV_i^1(M,\rho)\cap \Ima(\vepsi^*)
$$
for $0\leq i\leq n-1$, and
$$
\cV_n^{(\dim_\C\V\cdot|\chi(M)|+1)}(M,\rho)\cap \Ima(\vepsi^*)
$$
are all contained in
$$
\left\{t\in\C^*\mid
\prod_{k=0}^{n-1}\prod_{l=1}^{s_k}\deti(\gamma_\infty(F_{l,k}))
=0\right\}.
$$
\end{cor}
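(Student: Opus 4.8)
The plan is to deduce this purely formally, by combining the Proposition proved earlier in this subsection---which identifies each set $\cV_i^1(M,\rho)\cap\Im(\vepsi^*)$, for $0\le i\le n-1$, with the zero set of the product $\Delta_i^{\vepsi,\rho}(M)(t)\cdot\Delta_{i-1}^{\vepsi,\rho}(M)(t)$, and identifies $\cV_n^{(\dim_\C(\V)\cdot|\chi(M)|+1)}(M,\rho)\cap\Im(\vepsi^*)$ with the zero set of $\Delta_{n-1}^{\vepsi,\rho}(M)(t)$---with \cref{rootsgeneral}, which says that for every $j$ with $0\le j\le n-1$ the zeros of $\Delta_j^{\vepsi,\rho}(M)$ are among those of $\prod_{k=0}^{n-1}\prod_{l=0}^{s_k}\det(t^{\vepsi(\gamma_\infty(F_{l,k}))}\rho(\gamma_\infty(F_{l,k}))-\id)$. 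No new geometry is needed; it is just a matter of tracking indices.

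Concretely, I would first fix $i$ with $0\le i\le n-1$ and invoke the Proposition to write $\cV_i^1(M,\rho)\cap\Im(\vepsi^*)$ as the set of $t\in\C^*$ at which $\Delta_i^{\vepsi,\rho}(M)$ or $\Delta_{i-1}^{\vepsi,\rho}(M)$ vanishes, with the convention $\Delta_{-1}^{\vepsi,\rho}(M)=1$ (so that for $i=0$ this is just the zero set of $\Delta_0^{\vepsi,\rho}(M)$, matching the first corollary above and \cref{zero}). The two indices $i$ and $i-1$ both lie in $\{-1,0,\ldots,n-1\}$, and on $\{0,\ldots,n-1\}$ the zeros of the corresponding Alexander polynomial are controlled by \cref{rootsgeneral}; since the zero set of a product of determinants is the union of the zero sets of the factors, it follows that $\cV_i^1(M,\rho)\cap\Im(\vepsi^*)$ is contained in the zero set of $\prod_{k=0}^{n-1}\prod_{l=0}^{s_k}\det(t^{\vepsi(\gamma_\infty(F_{l,k}))}\rho(\gamma_\infty(F_{l,k}))-\id)$. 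For the top stratum, the second half of the Proposition identifies $\cV_n^{(\dim_\C(\V)\cdot|\chi(M)|+1)}(M,\rho)\cap\Im(\vepsi^*)$ with the zero set of $\Delta_{n-1}^{\vepsi,\rho}(M)$, and since $n-1\in\{0,\ldots,n-1\}$, \cref{rootsgeneral} again gives the same bound. Assembling these two cases yields the statement.

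I do not expect a genuine obstacle here: the corollary is an immediate consequence of the two results it cites. The only point deserving a line of care is the boundary case $i=0$, where one must read $\Delta_{-1}^{\vepsi,\rho}(M)$ as a unit so that the Proposition correctly gives $\cV_0^1(M,\rho)\cap\Im(\vepsi^*)=\{t\in\C^*\mid\Delta_0^{\vepsi,\rho}(M)(t)=0\}$; with that convention in place, every index that occurs falls within the range $0\le\cdot\le n-1$ on which \cref{rootsgeneral} is stated, and the argument goes through verbatim.
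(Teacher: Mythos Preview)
Your proposal is correct and matches the paper's approach exactly: the paper states explicitly that this corollary follows from the Proposition relating jump loci to zeros of twisted Alexander polynomials together with \cref{rootsgeneral}, and does not supply any further argument. Your handling of the boundary case $i=0$ via the convention $\Delta_{-1}^{\vepsi,\rho}(M)=1$ is the right way to make the deduction precise.
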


%%%%%%%%%%%%%%%%%%%%%%%%%%%%%%%%%%%%%%%%%%%%%%%%%%%%%%%%%%%%%%%%%%%%%%%%%%%%%%%%%%
%%%%%%%%%%%%%%%%%%%%%%%%%%%%%%%%%%%%%%%%%%%%%%%%%%%%%%%%%%%%%%%%%%%%%%%%%%%%%%%%%%
%%%%%%%%%%%%%%%%%%%%%%%%%%ACKNOWLEDGEMENTS%%%%%%%%%%%%%%%%%%%%%%%%%%%%%%%%%%%%%%%%
%%%%%%%%%%%%%%%%%%%%%%%%%%%%%%%%%%%%%%%%%%%%%%%%%%%%%%%%%%%%%%%%%%%%%%%%%%%%%%%%%%
%%%%%%%%%%%%%%%%%%%%%%%%%%%%%%%%%%%%%%%%%%%%%%%%%%%%%%%%%%%%%%%%%%%%%%%%%%%%%%%%%%

\section*{Acknowledgements}
The author would like to thank Lauren\c{t}iu Maxim for all of his guidance and support during this project. She would also like to acknowledge Enrique Artal-Bartolo, Jos\' e Ignacio Cogolludo-Agust\' in, and Miguel \' Angel Marco-Buzun\' ariz for the interesting discussions we had about this topic.

%%%%%%%%%%%%%%%%%%%%%%%%%%%%%%%%%%%%%%%%%%%%%%%%%%%%%%%%%%%%%%%%%%%%%%%%%%%%%%%%%%
%%%%%%%%%%%%%%%%%%%%%%%%%%%%%%%%%%%%%%%%%%%%%%%%%%%%%%%%%%%%%%%%%%%%%%%%%%%%%%%%%%
%%%%%%%%%%%%%%%%%%%%%%%%%%BIBLIOGRAPHY%%%%%%%%%%%%%%%%%%%%%%%%%%%%%%%%%%%%%%%%%%%%
%%%%%%%%%%%%%%%%%%%%%%%%%%%%%%%%%%%%%%%%%%%%%%%%%%%%%%%%%%%%%%%%%%%%%%%%%%%%%%%%%%
%%%%%%%%%%%%%%%%%%%%%%%%%%%%%%%%%%%%%%%%%%%%%%%%%%%%%%%%%%%%%%%%%%%%%%%%%%%%%%%%%%

\begin{bibdiv}
\begin{biblist}
\bib{arv}{article}{
   author={Arvola, W. A.},
   title={The fundamental group of the complement of an arrangement of
   complex hyperplanes},
   journal={Topology},
   volume={31},
   date={1992},
   number={4},
   pages={757--765}
}

%\bib{cogozvk}{article}{
   %author={Cogolludo-Agust\'\i n, J. I.},
   %title={Braid monodromy of algebraic curves},
   %journal={Ann. Math. Blaise Pascal},
   %volume={18},
   %date={2011},
   %number={1},
   %pages={141--209}
%}

\bib{cogo}{article}{
   author={Cogolludo-Agust\'\i n, J. I.},
   author={Florens, V.},
   title={Twisted Alexander polynomials of plane algebraic curves},
   journal={J. Lond. Math. Soc. (2)},
   volume={76},
   date={2007},
   number={1},
   pages={105--121}
}

\bib{suciu}{article}{
   author={Cohen, D. C.},
   author={Suciu, A. I.},
   title={The boundary manifold of a complex line arrangement},
   book={
      series={Geom. Topol. Monogr.},
      volume={13},
      publisher={Geom. Topol. Publ., Coventry},
   },
   date={2008},
   pages={105--146}
}

\bib{dimca}{book}{
   author={Dimca, A.},
   title={Sheaves in topology},
   series={Universitext},
   publisher={Springer-Verlag, Berlin},
   date={2004}
}

\bib{dimca2}{book}{
   author={Dimca, A.},
   title={Singularities and topology of hypersurfaces},
   series={Universitext},
   publisher={Springer-Verlag, New York},
   date={1992}
}

\bib{dl}{article}{
   author={Dimca, A.},
   author={Libgober, A.},
   title={Regular functions transversal at infinity},
   journal={Tohoku Math. J. (2)},
   volume={58},
   date={2006},
   number={4},
   pages={549--564}
}

\bib{dn}{article}{
   author={Dimca, A.},
   author={N\'emethi, A.},
   title={Hypersurface complements, Alexander modules and monodromy},
   book={
      series={Contemp. Math.},
      volume={354},
      publisher={Amer. Math. Soc., Providence, RI},
   },
   date={2004},
   pages={19--43}
}

\bib{falk}{article}{
   author={Falk, M.},
   title={Homotopy types of line arrangements},
   journal={Invent. Math.},
   volume={111},
   date={1993},
   number={1},
   pages={139--150}
}

\bib{mam}{article}{
   author={Florens, V.},
   author={Guerville-Ball\'e, B.},
   author={Marco-Buzunariz, M. A.},
   title={On complex line arrangements and their boundary manifolds},
   journal={Math. Proc. Cambridge Philos. Soc.},
   volume={159},
   date={2015},
   number={2},
   pages={189--205}
}
\bib{godement}{book}{
   author={Godement, Roger},
   title={Topologie alg\'{e}brique et th\'{e}orie des faisceaux},
   series={Actualit'es Sci. Ind. No. 1252. Publ. Math. Univ. Strasbourg. No.
   13 },
   publisher={Hermann, Paris},
   date={1958},
}

\bib{hironaka}{article}{
   author={Hironaka, E.},
   title={Boundary manifolds of line arrangements},
   journal={Math. Ann.},
   volume={319},
   date={2001},
   number={1},
   pages={17--32}
}

\bib{yau}{article}{
   author={Jiang, T.},
   author={Yau, S. S.-T.},
   title={Intersection lattices and topological structures of complements of
   arrangements in ${\bf C}{\rm P}^2$},
   journal={Ann. Scuola Norm. Sup. Pisa Cl. Sci. (4)},
   volume={26},
   date={1998},
   number={2},
   pages={357--381}
}

\bib{kirk}{article}{
   author={Kirk, P.},
   author={Livingston, C.},
   title={Twisted Alexander invariants, Reidemeister torsion, and
   Casson-Gordon invariants},
   journal={Topology},
   volume={38},
   date={1999},
   number={3},
   pages={635--661}
}

\bib{morse}{article}{
   author={Kohno, T.},
   author={Pajitnov, A.},
   title={Circle-valued Morse theory for complex hyperplane arrangements},
   journal={Forum Math.},
   volume={27},
   date={2015},
   number={4},
   pages={2113--2128}
}

\bib{libgober}{article}{
   author={Libgober, A.},
   title={Eigenvalues for the monodromy of the Milnor fibers of
   arrangements},
   conference={
      title={Trends in singularities},
   },
   book={
      series={Trends Math.},
      publisher={Birkh\"auser, Basel},
   },
   date={2002},
   pages={141--150}
}

\bib{lib2}{article}{
   author={Libgober, A.},
   title={On the homotopy type of the complement to plane algebraic curves},
   journal={J. Reine Angew. Math.},
   volume={367},
   date={1986},
   pages={103--114}
}

\bib{lib3}{article}{
   author={Libgober, A.},
   title={The topology of complements to hypersurfaces and nonvanishing of a
   twisted de Rham cohomology},
   book={
      series={AMS/IP Stud. Adv. Math.},
      volume={5},
      publisher={Amer. Math. Soc., Providence, RI},
   },
   date={1997},
   pages={116--130}
}

\bib{maxtommy}{article}{
   author={Maxim, L.},
   author={Wong, K.},
   title={Twisted Alexander invariants of complex hypersurface complements},
   journal={Proc. Roy. Soc. Edinburgh Sect. A},
   date={2018},
   pages={1--25}
}

\bib{milmor}{book}{
   author={Milnor, J.},
   title={Morse theory},
   series={Based on lecture notes by M. Spivak and R. Wells. Annals of
   Mathematics Studies, No. 51},
   publisher={Princeton University Press, Princeton, N.J.},
   date={1963}
}

\bib{milnor}{article}{
   author={Milnor, J.},
   title={Whitehead torsion},
   journal={Bull. Amer. Math. Soc.},
   volume={72},
   date={1966},
   pages={358--426}
}

%\bib{shimada}{article}{
   %author={Shimada, I.},
   %title={On the Zariski-van Kampen theorem},
   %journal={Canad. J. Math.},
   %volume={55},
   %date={2003},
   %number={1},
   %pages={133--156}
%}

\end{biblist}
\end{bibdiv}

\Addresses

\end{document}